\documentclass[11pt]{article}

\usepackage[margin=1in]{geometry} %page settings
\usepackage{graphicx,caption}
\graphicspath{ {./figures/} }

\usepackage{amsmath, mathtools, thmtools, amsthm}
\usepackage{mathrsfs}
\usepackage{enumerate} 
\usepackage{amsfonts,bm, braket}
\usepackage[scr=boondox]{mathalfa}
\usepackage{xcolor}

\usepackage{bookmark}
\usepackage{setspace}
\usepackage[backend=bibtex,style= numeric]{biblatex}

\usepackage[utf8]{inputenc}
\newtheorem{theorem}{Theorem}[section]

\newtheorem{lemma}[theorem]{Lemma}

\theoremstyle{definition}

\theoremstyle{definition}
\newtheorem*{defn}{Definition}
\theoremstyle{remark}
\newtheorem*{rem}{Remark}
\theoremstyle{empty}

\newcommand{\br}[1]{\left( #1 \right)}
\newcommand{\textbi}[1] {\textbf{\textit{#1}}}
\newcommand{\comment}[1]{}
\DeclarePairedDelimiter\abs{\lvert}{\rvert}%
\DeclarePairedDelimiter\ceil{\lceil}{\rceil}%

\def\reals{{\mathbb R}}

\def \R {{\mathscr{R}}}
\def\S{{\cal S}}
\def\P{{\cal P}}

\def \F {{\mathscr{F}}}

\def \H {{\mathscr{H}}}
\def \G {{\mathscr{G}}}

\def \conv {{\sf{conv}}}

\title{Helly-Type Theorems for Splitting Point Sets \thanks{A preliminary version of this article appeared in the Proceedings of the 37th ACM-SIAM Syposium on Discrete Algorithms (SODA), January 11-14,2026, Vancouver, Canada.}}
\author{ Lidor Portal \thanks{ Department of Computer Science, Ben-Gurion University of the Negev, Beer-Sheva, Israel 84105, \texttt{lidorpor@post.bgu.ac.il}. Supported by the Lynn and William Frankel Center for Computer Science at Ben-Gurion University and by grants 2891/21 and 3010/25 from Israel Science Foundation.} \and Natan Rubin \thanks{Department of Computer Science, Ben-Gurion University of the Negev, Beer-Sheva, Israel 84105, \texttt{rubinnat.ac@gmail.com}. Supported by grants 2891/21 and 3010/25 from Israel Science Foundation.}}

\begin{document}
\maketitle

\begin{abstract}
Let $0 < \alpha \leq 1/2$. We say that a finite point set $P$ in $\reals^d$ is {\it $\alpha$-split} by a hyperplane $h$ if each of the closed half-spaces determined by $h$, contains at least $\alpha \abs{P}$ of the points of
$P$. We further say $P$ is $\alpha$-split by a $k$-dimensional flat $\tau$ if $P$ is $\alpha$-split by {\it any} hyperplane through $\tau$.  In the standard notation (which coincides with Tukey depth for $k= 0$), the $k$-flat $\tau$ has depth $\alpha$ with respect to $P$.

We establish interesting Helly-type theorems for splitting families of finite point sets in $\reals^d$. Unlike the classical sufficient Helly-type criteria for transversals to families of compact convex sets,
which exist only for point and hyperplanes, our results extend to splitting families of point sets by collections of $k$-flats of arbitrary dimensionality $ 0 \leq k \leq d-1$.

\end{abstract}

\newpage
\section{Introduction}

A {\it $k$-flat} in $\reals^d$ is a translate of a linear vector subspace of dimension $k$;  Flats are also known as {\it affine} subspaces \cite[Ch. 1.1]{matousek2013lectures}.
In particular, $0$-flats are points in space and $(d-1)$-flats are {\it hyperplanes}. 

The Ham-Sandwich Theorem \cite{st1942ham} implies that any $d$ finite point sets $P_1,\ldots,P_d$ in $\reals^d$ can be simultaneously equipartitioned by a hyperplane $H$, so that each of the induced open halfspaces in $\reals^d\setminus H$ contains at most $|P_i|/2$ of the points of each set $P_i$, for $1\leq i\leq d$. More generally, the Central Transversal Theorem \cite{Dol92,ZV90} implies that any $k+1$ finite point sets can be partitioned by a $k$-flat $\tau$, such that any closed half-space containing $\tau$ also contains at least $\frac{|P_i|}{d-k+1}$ points of $P_i$, for all $1\leq i \leq k+1$.   
In the past decades, this statement of distinctly topological flavour \cite{st1942ham} had been extended in manifold and exciting ways to partitions of families of point sets and continuous distributions by a small number of hyperplanes \cite{roldan2022survey}. 
Many of these results bear strong relation to the study of Helly-type theorems \cite{eckhoff1993helly}.

Unfortunately, hardly any of these theorems apply to scenarios where the number of point sets is far larger than the dimension $d$ of the underlying Euclidean space. For example, consider a family $\mathcal{P} = \{P_1,\dotsc, P_n\}$ comprised of $n\gg d$ point sets which are contained in infinitesimally small balls whose centers are in general position. Since no hyperplane crosses the convex hulls of more than $d$ member sets of $\mathcal{P}$, it follows that no more than $d$ of them can be split (for any proportion) by a single hyperplane(see Figure \ref{fig:no-ham-sandwich} for visual illustration).

\begin{figure}[h]
  \centering
  \includegraphics[scale = 0.4]{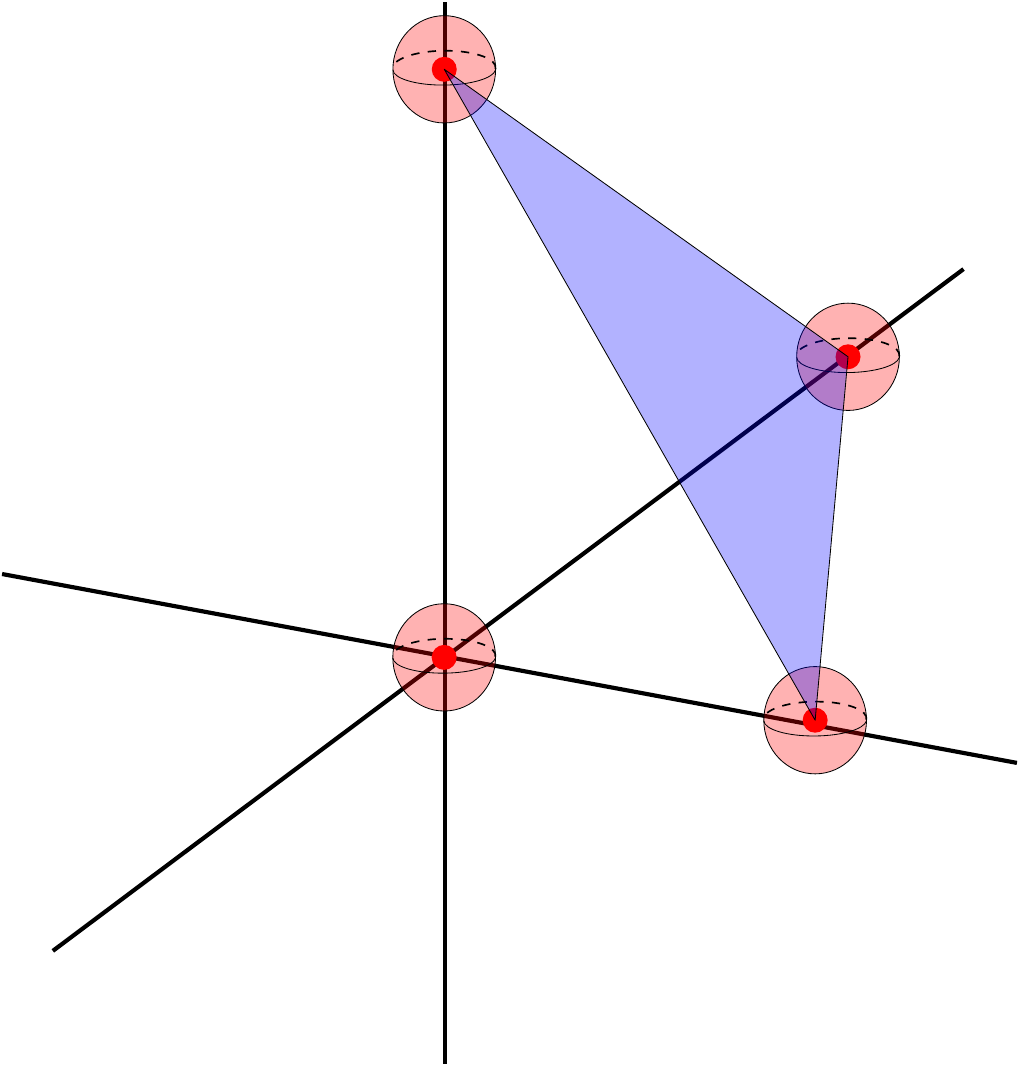}
  \caption{\fontfamily{cmss}\selectfont \footnotesize
  No (hyper)plane in $\reals^3$ crosses the centers of all 4 sets. Thus, for any proportion $\alpha\in(0,1/2]$ no (hyper)plane $\alpha$-splits all 4 sets simultaneously.}
  \label{fig:no-ham-sandwich}
\end{figure}

In this paper, we establish interesting {\it Helly-type} conditions for splitting arbitrary finite families $\{P_1,\ldots,P_n\}$ of finite $d$-dimensional point sets.

\noindent {\bf Helly-type theorems.} Let $k<d$ be non-negative integers. We say that a collection $\H$ of $k$-flats is a {\it transversal} to a family $\F$ of sets in $\reals^d$ if every set in $\F$ is intersected by at least one element of $\H$.

The celebrated theorem of Helly \cite{helly1923mengen} states that any family $\F$ of at least $d+1$ compact convex sets in $\reals^d$ has a non-empty intersection $\bigcap \F\neq \emptyset$ if and only if any $(d+1)$-size subset $\G\in {\F\choose d+1}$ does. In other words, such a family $\F$ admits a transversal of a single point (i.e., a $0$-flat).
 
In the past 50 years Geometric Transversal Theory has been preoccupied with the following questions (see, e.g, \cite{Amenta2015HellysTN,danzer1963helly,eckhoff1993helly,goodman1993new}):

\begin{itemize}
  \item Does Helly’s Theorem generalize to transversals by $k$-flats, for $1 \leq k \leq d-1$?
  \item Given that a significant fraction of the $(d+1)$-tuples $\G \in  \binom{\F}{d+1}$ have a non-empty intersection, can the family $\F$ of convex sets, or at least some fixed fraction of its members, be pierced by constantly many points?
\end{itemize}

The first question has been settled to the negative already for $k=1$. For instance, Santal\'o \cite{santalo1940teorema} and Danzer \cite{danzer1957problem} observed that for any $n \geq 3$ there are families $\F$ of $n$ convex sets in $\reals^2$ so that any $n-1$ of the sets can be crossed by a single line transversal while no such transversal exists for $\F$. Nevertheless, Alon and Kalai \cite{alon1995bounding} showed that the following almost-Helly property holds for $k = d-1$: If every $d+1$ (or fewer) of the sets of $\F$ can be crossed by a hyperplane, then $\F$ admits a transversal by $C$ hyperplanes, where the number
$C= C(d)$ depends only on the dimension $d$. While the properties of hyperplane transversals largely resemble those of point transversals, this is not the case for transversals by $k$-flats of intermediate dimensions $1 \leq k \leq d-2$. This phenomenon can be largely attributed to the complex topological structure of the space of transversal $k$-flats.

The second question has given rise to such fundemental results as the so called fractional Helly Theorem \cite{katchalski1979problem}, Lov{\'a}sz's Colored Helly Theorem \cite{BARANY1982141} and Alon and Kleitman's $(p,q)$-theorem \cite{alon1992piercing}; see \cite{eckhoff1993helly,barany2022,wenger2004helly} for a more comprehensive exposition.
The first result in this series states that if {\it sufficiently many} of the $(d+1)$-tuples $\G\subseteq \F$ have non-empty intersections $\bigcap \G$ then some fraction of the sets can be simultaneously intersected by a single point; See Section \ref{Sec:Prelim} for a more comprehensive discussion. 

\noindent {\bf The $\bm{(p,q)}$-theorems for transversals to convex sets.}
A seminal result of Alon and Kleitman \cite{alon1992piercing} yields a small-size point transversal if the Helly's condition on the $(d+1)$-tuples of convex sets is replaced by a certain {\it $(p,q)$-property}, which we state in a more general form.

\noindent {\bf Definition.} For any positive integers $p\geq q$, 
a family $\F$ of sets is said to have the {\it $(p,q)$-property} if among any $p$ members of $\F$, some $q$ have a non-empty common intersection. 

A family $\F$ of {\it compact convex sets} is said to have the {\it $(p,q)$-property with respect to $k$-flats} (i.e., translates of $k$-dimensional linear subspaces \cite{matousek2013lectures}) if among any $p$ members of $\F$, some $q$ can be simultaneously crossed by a single $k$-flat. 

\begin{theorem}[Alon and Kleitman \cite{alon1992piercing}]\label{Theorem:AlonKleitman}
For any positive integers $p,q$ and $d$ that satisfy $p\geq q\geq d+1$, there exists a finite number $C(p,q,d)$ such that the following statement holds: any finite family $\F$ of  convex sets in $\reals^d$ that satisfies the $(p,q)$-property with respect to points, admits a transversal by $C$ points.
\end{theorem}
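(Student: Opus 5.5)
We follow the standard Alon--Kleitman route: fractional Helly, then a fractional transversal via LP duality, then weak $\epsilon$-nets. We may assume the sets are compact, e.g.\ by replacing each member of $\F$ with the convex hull of finitely many chosen points from the relevant intersections; only finitely many witnesses matter. Put $n=\abs{\F}$.

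\textbf{Step 1: Many intersecting $(d+1)$-tuples.} Apply the $(p,q)$-property to every $p$-subset of $\F$. Each $p$-subset contains an intersecting $q$-subset, and $q\geq d+1$, so each $p$-subset contains an intersecting $(d+1)$-subset. A fixed $(d+1)$-subset lies in at most $\binom{n-d-1}{p-d-1}$ of the $p$-subsets. Double counting therefore gives at least $\binom{n}{p}/\binom{n-d-1}{p-d-1}=\Omega_{p,d}\bigl(\binom{n}{d+1}\bigr)$ intersecting $(d+1)$-tuples. The fractional Helly theorem of Katchalski--Liu then yields a point common to at least $\beta n$ members, with $\beta=\beta(p,q,d)>0$. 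The same bound holds for any multiset obtained by repeating members, since repetitions preserve the $(p,q)$-property up to adjusting constants. One handles repeats either by noting that $p$ copies of sets with a point in common trivially satisfy the condition, or by applying fractional Helly to multisets directly.

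\textbf{Step 2: A fractional transversal of bounded size.} Consider the LP: minimize $\sum_x w(x)$ over nonnegative weights on a finite set of candidate points, subject to every $K\in\F$ receiving total weight at least $1$. The candidate set can be taken finite, namely one point from each nonempty cell of the arrangement of the sets. By LP duality, the optimum $\tau^*$ equals the maximum of $\sum_K u(K)$ over fractional packings $u\geq 0$ with $\sum_{K\ni x}u(K)\leq 1$ for every point $x$. Take an optimal rational $u$ and scale it to integer multiplicities, giving a multiset $\F'$ of $N$ sets. Step 1 applied to $\F'$ gives a point lying in at least $\beta N$ of its sets. That point carries $u$-weight at least $\beta\sum_K u(K)=\beta\tau^*$, which must be at most $1$. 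Hence $\tau^*\leq 1/\beta$.

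\textbf{Step 3: Weak $\epsilon$-nets.} Take an optimal fractional transversal $w$, normalize it to a probability measure $\mu=w/\tau^*$, and set $\epsilon=1/\tau^*\geq\beta$. Every $K\in\F$ has $\mu(K)\geq\epsilon$. By the weak $\epsilon$-net theorem for convex sets (Alon--B\'ar\'any--F\"uredi--Kleitman), there is a set of $f(d,\beta)$ points meeting every convex set of $\mu$-measure at least $\beta$. This set pierces all of $\F$, so $C(p,q,d)=f(d,\beta(p,q,d))$.

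\textbf{Main obstacle.} The main obstacle is Step 1 for multisets. It requires checking that fractional Helly plus the $(p,q)$-counting survive heavy repetition: a $p$-subset of the multiset may use only a few distinct sets. The clean fix is to count $(d+1)$-tuples of the multiset with repetition allowed. A tuple with repeated entries reduces to an intersecting tuple of fewer distinct sets, which is counted as intersecting whenever those sets meet. When $N$ is large relative to $p$, such degenerate tuples form a negligible proportion. The weak $\epsilon$-net theorem, also needed in Step 3, is used as a black box.
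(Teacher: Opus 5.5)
The paper gives no proof of this statement; it is quoted from Alon and Kleitman. Your outline (fractional Helly, then a bound $\tau^*\le 1/\beta$ via LP duality and a blown-up optimal packing, then weak $\epsilon$-nets) is exactly the original Alon--Kleitman argument. The compactness reduction, the duality computation in Step~2, and Step~3 are fine.

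The gap is the step you flag yourself. You never show that the multiset $\F'$ has $\Omega\bigl(\binom{N}{d+1}\bigr)$ intersecting $(d+1)$-tuples, and the fix you propose does not work. What breaks Step~1 is repeated \emph{underlying sets}, not repeated indices. These are not negligible when $N$ is large: $N$ grows with the common denominator of $u$, while the support of $u$ can stay bounded.

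For example, suppose $p\ge q+d$. Let $\F$ consist of $m=d+1$ pairwise disjoint small balls $A_1,\dots,A_m$, together with at least $p$ large balls each containing all the $A_i$. Any $p$ members contain at most $m\le p-q+1$ of the $A_i$. The large balls among them, together with one $A_i$ if present, give at least $q$ members with a common point, so $\F$ has the $(p,q)$-property. However, weight $s$ on the large balls forces $u(A_i)\le 1-s$ for every $i$. So the unique optimal packing is $u(A_i)=1$ and $u=0$ elsewhere. For every choice of denominator, each intersecting $(d+1)$-tuple of $\F'$ then consists of copies of a single $A_i$. The ``degenerate'' tuples you propose to discard are the only intersecting ones.

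For the same reason, $\F'$ does not inherit the $(p,d+1)$-property: a $p$-subset may consist of at most $d$ copies each of pairwise disjoint sets. So your remark that copies ``trivially satisfy the condition'' does not rescue Step~1 with the same $p$.

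The missing idea is a pigeonhole step. Put $p'=d(p-1)+1$ and choose the denominator so that $N\ge p'$. Among any $p'$ members of $\F'$, one of two things happens:
\begin{enumerate}
\item some $d+1$ are copies of one set, so they intersect; or
\item at least $p$ distinct members of $\F$ occur, and among these some $q\ge d+1$ intersect.
\end{enumerate}
Thus $\F'$ has the $(p',d+1)$-property. Your double count then gives at least $\binom{N}{d+1}/\binom{p'}{d+1}$ intersecting $(d+1)$-subsets of the $N$ copies. Fractional Helly holds verbatim with repetitions (e.g.\ via the lexicographic-minimum proof). It yields a point lying in at least $\beta N$ copies, with $\beta=\beta(p,d)>0$. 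With this in place, Step~2 gives $\tau^*\le 1/\beta$, and the rest of your argument goes through unchanged.
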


\noindent See \cite{KST18improved} for improved bounds on the constant $C=C(p,q,d)$ in Theorem \ref{Theorem:AlonKleitman}.  

A parallel $(p,q)$-theorem for {\it hyperplane transversals} was established by Alon and Kalai.

\begin{theorem}[Alon and Kalai \cite{alon1995bounding}]\label{Theorem:AlonKalai}
For any positive integers $p,q$ and $d$ that satisfy $p\geq q\geq d+1$, there exists a finite number $C(p,q,d)$ such that the following statement holds: any family $\F$ of convex sets in $
\reals^d$ that satisfies the $(p,q)$-property with respect to hyperplanes admits a transversal by $C$ hyperplanes.
\end{theorem}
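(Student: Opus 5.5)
The plan is to run the Alon--Kleitman machinery behind Theorem~\ref{Theorem:AlonKleitman} in the dual space of hyperplanes. After a generic rotation we may assume that no relevant hyperplane is vertical. We then parametrize each hyperplane as $h_{a,b}=\{x_d=\langle a,x'\rangle+b\}$ with $(a,b)\in\reals^{d}$. For a compact convex set $K$ we put
\[
f_K(a)=\min_{x\in K}\bigl(x_d-\langle a,x'\rangle\bigr),\qquad g_K(a)=\max_{x\in K}\bigl(x_d-\langle a,x'\rangle\bigr).
\]
Here $f_K$ is concave and $g_K$ is convex, and $h_{a,b}$ meets $K$ if and only if $f_K(a)\le b\le g_K(a)$. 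So the dual set $T(K)$ of transversal hyperplanes is the complement of $A_K\cup B_K$, where $A_K=\{b>g_K(a)\}$ and $B_K=\{b<f_K(a)\}$ are both convex. The whole proof rests on this structure: ``the complement of a union of two convex sets''.

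\textbf{Steps in order.}
\begin{enumerate}
\item \emph{Fractional Helly for hyperplane transversals.} If an $\alpha$-fraction of the $(d+1)$-tuples of $\F$ have a common transversal hyperplane, then some $\beta(\alpha,d)|\F|$ members do. I would derive this from the fractional Helly theorem of Katchalski--Liu, applied in the dual to the convex pieces $A_K,B_K$. Equivalently, one can use a nerve/Tverberg-type counting: a $(d+1)$-tuple has a common transversal exactly when the $2(d+1)$ convex sets $A_K,B_K$ fail to cover a point, and one then extracts a large subfamily by a Ramsey/pigeonhole argument on which side each set lies.
\item \emph{From $(p,q)$ to the fractional transversal number.} Since $q\ge d+1$, the $(p,q)$-property forces a positive fraction of $(d+1)$-tuples to be pierceable. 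This is the standard counting step: every $p$-set contains a good $q$-set, hence a good $(d+1)$-tuple. Then LP duality together with step 1, applied to weighted (multiset) versions of $\F$, bounds the fractional hyperplane-transversal number $\tau^*(\F)$ by a constant $c(p,q,d)$, exactly as in Alon--Kleitman.
\item \emph{Weak $\varepsilon$-nets for hyperplane transversals.} For any finite probability measure $\mu$ on dual points (hyperplanes), we need $N(\varepsilon,d)$ hyperplanes that meet every $K$ with $\mu(T(K))\ge\varepsilon$. Applying this with $\varepsilon=1/c$ to an optimal fractional transversal gives the bound $C(p,q,d)$.
\end{enumerate}

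\textbf{Main obstacle.} Step 3 is where I expect the difficulty. The ranges $T(K)$ do not have bounded VC-dimension, and they are not convex, so weak $\varepsilon$-nets for convex sets do not apply directly. My first attempt would use the fact that a finite set $N$ of dual points fails to hit $T(K)$ exactly when $N\subseteq A_K\cup B_K$. The idea is to take $N$ to be a weak $\varepsilon'$-net for convex sets with respect to $\mu$, closed under a bounded number of extra ``centerpoint-type'' points of pairs and sub-configurations. Suppose a heavy $K$ had $N\subseteq A_K\cup B_K$. Then $N\cap A_K$ and $N\cap B_K$ would be separated, since $A_K$ and $B_K$ lie on opposite sides of the graph region. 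One then hopes to show that one of the convex sets $A_K$ or $B_K$ carries at least a $\tfrac{1-\varepsilon}{2}$-portion of the $\mu$-mass in a way that contradicts the net property.

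If this fails, my fallback is a more direct route. Project onto finitely many directions: for a fixed normal $u$, the hyperplanes with normal $u$ that meet $K$ form an interval, namely the projection of $K$. One could then combine the $(p,q)$-property for intervals (Gallai-type bounds) with a compactness/pigeonhole argument over a bounded number of normal directions obtained from the fractional step.
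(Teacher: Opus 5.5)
The paper gives no proof of this statement; it is quoted from Alon and Kalai, so your proposal has to stand on its own. Your architecture is the right Alon--Kleitman scheme: fractional Helly, then a bounded fractional transversal number via LP duality, then a weak $\varepsilon$-net. Step~2 is routine once Step~1 is known; for the multiset version, note that any $d$ convex sets have a common transversal hyperplane, so $(d+1)$-tuples with repeated members are automatically good.

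The genuine gap is Step~1, which is where all the difficulty lies. A $(d+1)$-tuple has a common transversal hyperplane iff the $2(d+1)$ convex sets $A_{K_i},B_{K_i}$ fail to \emph{cover} $\reals^d$. That is a non-covering condition on a union of convex sets, not an intersection condition, so Katchalski--Liu does not apply to the pieces $A_K,B_K$. The usual proofs of fractional Helly do not transfer either:
\begin{itemize}
\item There is no Helly theorem, hence no colorful Helly theorem, for hyperplane transversals (the Santal\'o--Danzer examples). So B\'ar\'any's colorful route is closed.
\item The lexicographic-minimum argument fails because $\bigcap_i T(K_i)$ is not convex and can even be disconnected. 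Its extreme points are therefore not canonically attached to $d$ of the sets: already two overlapping convex polygons in the plane can have arbitrarily many common supporting lines. The pigeonhole charging of good $(d+1)$-tuples to $d$-subsets then collapses.
\end{itemize}
Your remark about a ``Ramsey/pigeonhole argument on which side each set lies'' names no mechanism that produces a single hyperplane meeting $\beta n$ sets. The fractional Helly theorem for hyperplane transversals is the real substance of Alon and Kalai's result, and as written your Step~1 simply assumes it.

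Conversely, Step~3, which you flag as the main obstacle, is the easy step, and neither of your attempts at it works. A weak $\varepsilon'$-net $N$ for convex sets only guarantees points inside heavy convex sets. That is entirely consistent with $N\subseteq A_K\cup B_K$ when both pieces are heavy. A contradiction would require a heavy convex set inside the non-convex region $T(K)$, which need not exist. The fallback through finitely many normal directions also fails, because the $(p,q)$-property does not descend to the interval families obtained by projecting onto a fixed direction.

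What works is a cutting in the primal. Take the finitely supported measure $\mu$ on hyperplanes coming from an optimal fractional transversal, and choose $r>1/\varepsilon$. Take a $(1/r)$-cutting of these weighted hyperplanes: $O(r^d)$ closed simplices with disjoint interiors covering $\reals^d$, each interior met by hyperplanes of total weight at most $1/r$. Let $N$ be the $O(r^d)$ hyperplanes spanned by the facets of these simplices. A convex $K$ that misses all of $N$ misses every simplex boundary, so it lies in the union of the disjoint open interiors. Being connected, it lies in a single interior. Then every hyperplane meeting $K$ meets that interior, so $\mu(T(K))\le 1/r<\varepsilon$. With $\varepsilon=1/\tau^*$ this completes the argument, but only once a genuine proof of the fractional Helly theorem in Step~1 is supplied.
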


Notice that the statement of Theorem \ref{Theorem:AlonKleitman} does not hold for $q \leq d$ : any collection $\F$ of $n$
hyperplanes in general position in $\reals^d$ has the $(d,d)$-property, yet does not allow a transversal by fewer than $\lceil\frac{n}{d}\rceil$ points. To see that the assumption $q  \geq d+1$ is also essential for Theorem \ref{Theorem:AlonKalai}, it is enough to consider a family of $n$ infinitesimally small balls in $\reals^d$ so that no $d+1$ of them lie on the same hyperplane; see Figure \ref{fig:no-ham-sandwich}.

As was demonstrated by Alon, Kalai, Matou\v{s}ek, and Meshulam \cite{alon2002transversal}, no comparable $(p,q)$-theorems exist for $k$-flats of intermediate dimensionality $1\leq k\leq d-2$: for any choice of $d\geq 3$, $1\leq k\leq d-2$, and an integer $h>k$, one can construct arbitrary large families of convex sets so that any $h$ of their members can be crossed by a single $k$-flat, yet no $h+4$ among them can.

\noindent {\bf $\mathbf{\alpha}$-splitting point sets with flats.} 
Let $\alpha \in (0,\frac{1}{2}]$ and $P$ be a finite point set in $\reals^d$.
We say that $P$ is $\alpha$-{\it split} by a hyperplane $h$ if each of the closed half-spaces that are supported by $h$ contains at least $\alpha |P|$ points of $P$.

We further say that $P$ is $\alpha$-{\it split} by a $k$-flat $\tau$, for $0\leq k\leq d-1$, if it is $\alpha$-\textit{split} by every hyperplane through $\tau$ (see Figure \ref{fig:alpha_splitting_flat}). 

\begin{figure}[h]
	\centering
	\includegraphics[scale = 0.5]{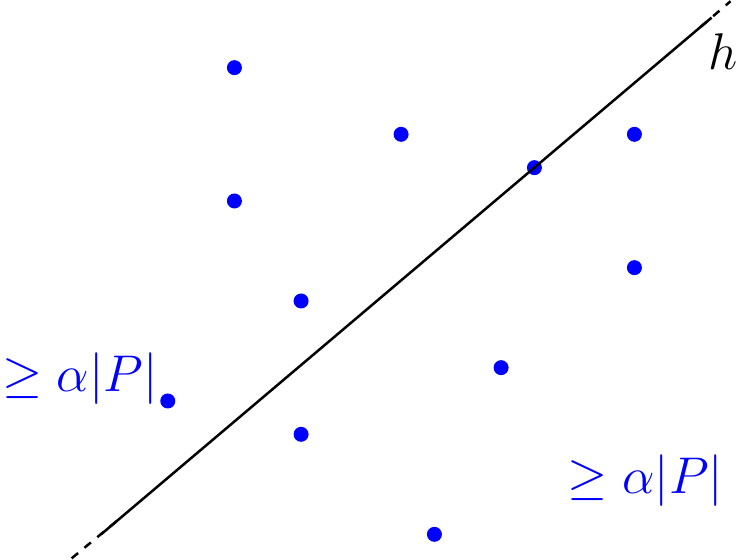}
	\caption {\fontfamily{cmss} \footnotesize $\alpha$-splitting property with respect to hyperplanes. Each closed half-space determined by $h$ contains at least an $\alpha$ fraction of the points of $P$. }
	\label{fig:alpha_splitting_flat}
\end{figure}

In the notation of Bukh, Matou\v{s}ek and Nivasch \cite{bukh2010stabbing}, such $k$-flats are said to have {\it depth $\alpha$}. 

\noindent A point $p\in \reals^d$ is said to have {\it depth} $\alpha$ with respect to $P$ if every closed half-space containing $p$ contains at least $\alpha|P|$ of the points of $P$. \\ 
More generally, let $\tau$ be a $k$-flat ($0\leq k \leq d-1$). The {\it depth} of $\tau$ as defined in \cite{magazinov2018improvement} is 
\begin{equation*}\label{eq:depth}
	\text{depth}_P(\tau)= \inf\left\{|H\cap P| \mid H \text{ is a closed half-space}, \tau \subset H\right\}
\end{equation*}

According to Rado's centerpoint theorem \cite{rado1946theorem}(also see Theorem \ref{Theorem:Centerpoint} in Section \ref{Sec:Prelim}), any finite point set in $\reals^d$ is $\left(\frac{1}{d+1}\right)$-split by at least one point. Furthermore, it is conjectured that every point set $P$ is $\left(\frac{k+1}{k+d+1}\right)$-split by at least one $k$-flat, for $0\leq k\leq d-1$ \cite{bukh2010stabbing}; recent partial results in this direction have been reported by Magazinov and P\'or \cite{magazinov2018improvement}.

\noindent {\bf The $\bm{(p,q,\alpha)}$-property for splitting point sets.}  For $p\geq q\geq d+1$, we say that a family $\P=\set{P_1,\dotsc,P_n}$ of finite point sets  has the {\it $(p,q,\alpha)$-property} with respect to \textit{$k$-flats} in $\reals^{d}$ if for any collection of $p$ members of $\P$, some $q$ among them can be simultaneously $\alpha$-split by the same $k$-flat $\tau$.  

We say that such a family $\P=\{P_1,\ldots,P_n\}$ is {\it $\alpha$-split} by a collection $Q$ of $k$-flats if each set $P_i\in \P$ is $\alpha$-split by at least one element of $Q$.

It is well-known that the set of all $\alpha$-deep points with respect to a finite point set $P$ gives rise to a (possibly empty) convex polytope $S_\alpha(P)$\footnote{Namely, $S_\alpha (P)$ is the intersection of all the open halfspaces that encompass more than $(1-\alpha)\abs{P}$ points of $P$. As is easy to check (see e.g. \cite[Ch. 1.4]{matousek2013lectures}), this set is either empty or a convex polytope.}. Consequently, the $(p,q,\alpha)$-property of $\P=\{P_1,\ldots,P_n\}$  \textbf{\textit{with respect to points}} is equivalent to the $(p,q)$-property of their respective ``$\alpha$-centersets'' $S_\alpha(P_i)$. Hence, applying the $(p,q)$-theorem of Alon and Kleitman (Theorem \ref{Theorem:AlonKleitman}) to the resulting family $\{S_\alpha(P_i)\mid 1\leq i\leq n\}$ yields a set $Q$ of $C(p,q,d)$ points which includes at least one $\alpha$-center for each member set $P_i\in \P$. In other words, we have established a ``$(p,q)$-theorem'' for $\alpha$-centerpoints. Unfortunately, this ``lossless'' argument does not extend to $\alpha$-splitting point sets with hyperplanes or $k$-flats of strictly positive dimensionality $k$: an $\alpha$-splitting hyperplane for a point set $P$ may nevertheless miss its centerset $S_\alpha(P)$ as illustrated in the following example (see Figure \ref{fig:splitting_hyperplane_misses_center}).

\begin{figure}[h]
  \centering
  \includegraphics[scale = 0.7]{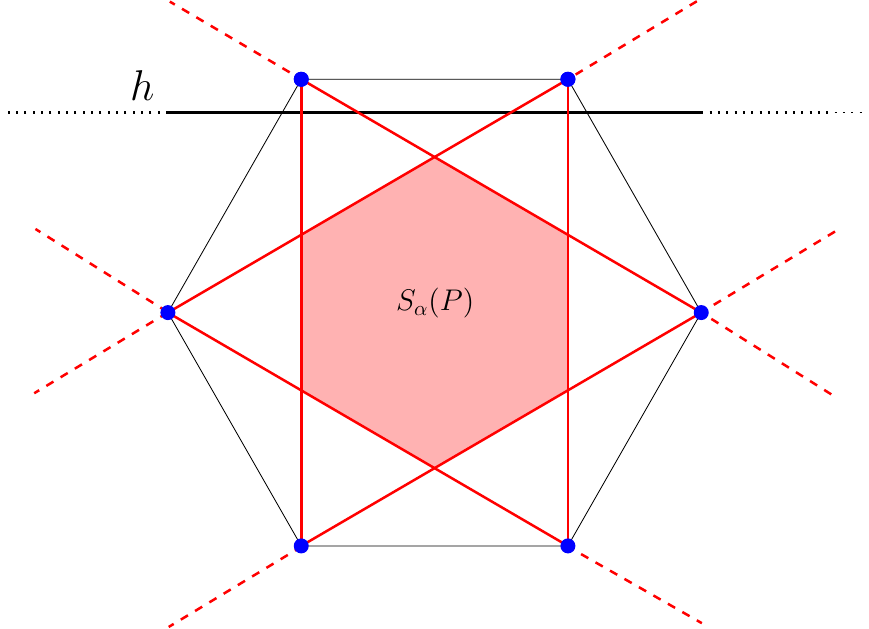}
  \caption{\fontfamily{cmss}\selectfont \footnotesize 
  For $\alpha = \frac{1}{d+1}$ and the set of blue points $P$ in $\reals^2$, the $\alpha$-centerset of $P$ is the set of all centerpoints of $P$ which is never empty by Rado's centerpoint theorem (Theorem \ref{Theorem:Centerpoint}). The set of all centerpoints is the intersection of all open half-spaces that contain more than $(1-\alpha)\abs{P} = \frac{d}{d+1}\abs{P}=\frac{2}{3}\abs{P} = 4$ points of $P$ (the area shaded in red in the figure), and thus is a convex polytope. However, it is evident that $h$ is $\alpha$-splitting $P$ despite not crossing $S_\alpha(P)$.}
  \label{fig:splitting_hyperplane_misses_center}
\end{figure}

\noindent {\bf Our results.} Our first contribution is the following $(p,q)$-type statement for $\alpha$-splitting finite point sets with hyperplanes. 

\begin{theorem}\label{Theorem:SplitHyperplanes}

For any $\alpha \in (0,\frac{1}{2}]$, and any integers $p\geq q \geq d+1$ there exist constants $C=C(p,q,d)< \infty$ and $
\beta=\beta(\alpha,d)\in (0,\alpha)$ such that the following statement holds:
  
Every family $\P = \set{P_1,\dots, P_n}$ of $n\geq p$ finite point sets in $\reals^d$ that has the $(p,q,\alpha)$-property with respect to hyperplanes, can be $\beta$-split by a set $Q$ of at most $C$ hyperplanes. 
  
\end{theorem}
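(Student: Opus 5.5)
Our plan is to reduce to the Alon--Kalai theorem (Theorem \ref{Theorem:AlonKalai}) by replacing each point set $P_i$ with a convex set $K_i$ such that (a) every hyperplane that $\alpha$-splits $P_i$ crosses $K_i$, and (b) every hyperplane that crosses $K_i$ $\beta$-splits $P_i$. Given (a), the $(p,q,\alpha)$-property of $\P$ with respect to hyperplanes implies the $(p,q)$-property of $\{K_i\}$ with respect to hyperplanes. Theorem \ref{Theorem:AlonKalai} then gives $C(p,q,d)$ hyperplanes meeting every $K_i$, and by (b) these hyperplanes $\beta$-split every $P_i$. The difficulty is that a single convex set satisfying both (a) and (b) need not exist: the example of Figure \ref{fig:splitting_hyperplane_misses_center} shows that $S_\alpha(P)$ fails (a), and the hull $\conv(P)$ fails (b). So we will work with a small family of convex sets per $P_i$, or equivalently with a ``hyperplane $(p,q)$-theorem'' for sets having bounded complexity.

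The concrete choice we would try first is the following. For each $P_i$, pick an ellipsoid $E_i$ by John's theorem or a centerpoint/approximation argument, namely one for which $E_i$ contains a point of depth $\ge \alpha'$ and at most a $(1-\alpha/2)$ fraction of $P_i$ lies far outside a constant dilation $\lambda E_i$. Apply an affine normalization so that $E_i$ is a unit ball. A hyperplane $h$ that $\alpha$-splits $P_i$ must pass within distance $O_d(1)$ of the ``core'' of $P_i$. Otherwise one side of $h$ would contain too few points: we would argue via a centerpoint of $P_i$ and the fact that the $\alpha/2$-deep region $S_{\alpha/2}(P_i)$ is fat. For the converse, we want hyperplanes through the $\beta$-centerset $K_i := S_\beta(P_i)$, which $\beta$-split $P_i$ automatically (every hyperplane through a $\beta$-deep point $\beta$-splits). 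Property (b) therefore holds for $K_i=S_\beta(P_i)$, and the task becomes proving a weakened form of (a). The weakened statement is that if $q$ sets are simultaneously $\alpha$-split by one hyperplane, then some $d+1$ of them (or a fixed fraction) have $\beta$-centersets crossed by a common hyperplane. We would aim for a fractional version rather than a full one, since fractional statements suffice for Alon--Kalai-style machinery.

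Accordingly, the proof would follow the standard $(p,q)$ template rather than invoke Theorem \ref{Theorem:AlonKalai} as a black box. First, from the $(p,q,\alpha)$-property we derive via a counting argument that a positive fraction of $(d+1)$-tuples of $\P$ are $\alpha$-split by a common hyperplane. Second, we prove a fractional Helly theorem: many such tuples imply that a constant fraction of $\P$ is $\beta$-split by one hyperplane. To do this, we pass to the dual, where hyperplanes become points and ``$h$ $\alpha$-splits $P_i$'' becomes membership of $h^*$ in a region $U_i$ between two $\alpha$-levels of the dual arrangement of $P_i$. We then compare $U_i$ with the region between the $\beta$-levels. Third, we use a weak $\epsilon$-net for the dual regions, obtained for example from the polytopes $S_\beta$ together with the Centerpoint Theorem \ref{Theorem:Centerpoint}, and combine it with LP duality as in Alon--Kleitman to obtain $C$ hyperplanes.

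The main obstacle is the second step: showing that $\alpha$-splitting hyperplanes degrade to $\beta$-splitting ones under a ``convexification'', with $\beta$ depending only on $\alpha$ and $d$. We would attack it by taking, for a hyperplane $h$ that $\alpha$-splits $P_i$, the two halves $P_i\cap h^+$ and $P_i\cap h^-$. Their centerpoints $c^\pm$ lie on opposite sides of $h$, so the segment $c^+c^-$ crosses $h$. Every hyperplane crossing that segment $\frac{\alpha}{d+1}$-splits $P_i$, because it separates neither half's centerpoint from a $\frac{1}{d+1}$ fraction of that half. This suggests $\beta=\alpha/(d+1)$ and taking as convex surrogates the segments $\conv(c^+,c^-)$. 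The remaining issue is that these segments depend on $h$. We would control this by restricting to boundedly many canonical segments per $P_i$, using an $\epsilon$-approximation of $P_i$, and then applying Theorem \ref{Theorem:AlonKalai} to the union of these bounded-complexity families.
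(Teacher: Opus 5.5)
Your proposal has a genuine gap: it never arrives at a valid reduction. The final step misuses Theorem \ref{Theorem:AlonKalai}. That theorem needs one convex set per $P_i$, so it says nothing when $P_i$ is represented by a union of boundedly many canonical segments. Applying it instead to the family of all individual segments fails because the $(p,q)$-property is not inherited. A hyperplane $h$ that $\alpha$-splits $q$ of the sets crosses the segments $c^+(h)c^-(h)$ built from $h$ itself, not the canonical segments you fixed in advance. The intermediate routes (ellipsoids with a fat $S_{\alpha/2}$, a fractional Helly theorem for the dual regions $U_i$, weak $\epsilon$-nets plus LP duality) are only sketched. The $U_i$ are neither convex nor of bounded complexity, so this amounts to reproving an Alon--Kalai-type theorem in a setting where no such tool is available. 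The root error is your claim that a single convex set with (a) and (b) need not exist. The failure of two candidates, $S_\alpha(P)$ and $\conv(P)$, does not show that. In fact the paper's Lemma \ref{Lemma:Core} constructs a compact convex $(\alpha,\beta)$-core for every $P$, and then proves the theorem by exactly the black-box reduction of your first paragraph.

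The missing observation is already in your last paragraph. Suppose $h$ $\alpha$-splits $P$, and $c^\pm$ is a centerpoint of $P\cap\overline{h^\pm}$. Then every closed halfspace containing $c^+$ contains at least $|P\cap\overline{h^+}|/(d+1)\geq\frac{\alpha}{d+1}|P|$ points of $P$. So $c^+$, and likewise $c^-$, lies in the centerset $S_\beta(P)$ with $\beta=\frac{\alpha}{d+1}$. Since $S_\beta(P)$ is convex and $c^\pm\in\conv(P\cap\overline{h^\pm})\subseteq\overline{h^\pm}$, the segment $c^+c^-$ lies in $S_\beta(P)$ and meets $h$.

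Hence $K_i:=S_{\alpha/(d+1)}(P_i)$ satisfies the full property (a), not just a weakened form, and (b) holds trivially, as you noted. It is a nonempty compact polytope by Theorem \ref{Theorem:Centerpoint}. No dependence on $h$ is left to control, and Theorem \ref{Theorem:AlonKalai} applies verbatim with $\beta=\alpha/(d+1)$. Equivalently, you could take the convex hull of centerpoints of all subsets of $P_i$ of size at least $\alpha|P_i|$: any hyperplane meeting it has such a centerpoint in each closed side.

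The paper reaches the same reduction through a heavier core: the convex hull of centerpoints of $\lceil\alpha r/2\rceil$-subsets of a transversal of Matou\v{s}ek's simplicial partition, which gives $\beta=\alpha/(2d+3)$. With this one step your route would be shorter and give a slightly better $\beta$, but as written it stops short of a proof.
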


Notice that any hyperplane (or, more generally, $k$-flat) that $\alpha$-splits a given sub-family $\P'\subseteq \P$ must, in particular, cross the convex hulls $\conv(P_i)$ of the member sets $P_i\in \P'$.
Hence, it hardly comes as a surprise that Theorem \ref{Theorem:SplitHyperplanes} can be deduced from Theorem \ref{Theorem:AlonKalai}.

\noindent{\bf A substitute to the $\alpha$-centerset for $k$-flat transversals.} 
To this end, we use the simplicial partition of Matou\v{s}ek \cite{matousek1992reporting} to 
show that any point set $P$ determines a more robust analogue $\tilde{S}_\alpha(P)$ of the $\alpha$-centerset $S_\alpha(P)$, with the following properties:

\begin{itemize}
\item any $\alpha$-splitting $k$-flat of $P$ must cross $\tilde{S}_\alpha(P)$ and, conversely, 
\item $P_i$ is $\beta$-split by any $k$-flat that crosses $\tilde{S}_\alpha(P)$, with a suitable constant ${\beta=\beta(\alpha,d) \in (0,\alpha)}$. 
\end{itemize}

\noindent {\bf A $\bm{(p,q)}$-type theorem for $k$-flats.} 
While this property of independent interest holds for $k$-flats of arbitrary dimensionality $0 \leq k \leq d-1$, the proposed proof of Theorem \ref{Theorem:SplitHyperplanes} does not extend to $k$-flats in the intermediate range $1\leq k \leq d-2$, where no analogue of Theorems \ref{Theorem:AlonKleitman} and \ref{Theorem:AlonKalai} can possibly exist \cite{alon2002transversal}.

Nevertheless, by combining $\epsilon$-approximations \cite{HW87} with Matou\v{s}ek’s Helly-type results for families of sets with bounded VC-dimension \cite{matousek2004bounded}, one can establish a stronger and more general $(p,q)$-type theorem. 
 
\begin{theorem}\label{Theorem:SplitFlats}
  
For every integers $d,\;0\leq k\leq d-1$ and every $p\geq q\geq (k+1)(d-k)+1$, and every reals $\alpha \in (0,\frac{1}{2}]$ and $\epsilon\in (0,\alpha)$, there is an integer $C=C(p,q,d,k,\epsilon)<\infty$ with the following property.
  
Every collection $\P = \set{P_1,\dotsc, P_n}$ of $n\geq p$ finite point-sets in $\reals^d$ that has the $(p,q,\alpha)$-property with respect to $k$-flats, can be $(\alpha-\epsilon)$-split by a family $Q$ of at most $C$ $k$-flats. 

\end{theorem}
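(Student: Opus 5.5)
Our plan is to reduce the statement to Matoušek's $(p,q)$-theorem for set systems of bounded dual VC-dimension \cite{matousek2004bounded}. That theorem says the following. Let $\mathcal{K}$ be a family of sets in a ground space $X$ whose dual shatter function is bounded by a polynomial of degree $m$. If a finite family of members of $\mathcal{K}$ has the $(p,q)$-property with $q\geq m+1$, then it can be pierced by $C(p,q)$ points of $X$.

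We take the ground space $X$ to be the space of all $k$-flats in $\reals^d$, which is a manifold of dimension $(k+1)(d-k)$. For each $P_i$ we would like to use the set $T_\alpha(P_i)\subseteq X$ of $k$-flats that $\alpha$-split $P_i$. These sets can have arbitrarily large complexity when $|P_i|$ is large. So we first replace every $P_i$ by an $\epsilon/2$-approximation $A_i\subseteq P_i$ with respect to closed half-spaces \cite{HW87}, whose size $s=s(d,\epsilon)$ is independent of $|P_i|$. For any hyperplane $h$, the fraction of $P_i$ in each closed half-space of $h$ differs from the corresponding fraction of $A_i$ by at most $\epsilon/2$. Hence any flat that $\alpha$-splits $P_i$ also $(\alpha-\epsilon/2)$-splits $A_i$, and any flat that $(\alpha-\epsilon/2)$-splits $A_i$ also $(\alpha-\epsilon)$-splits $P_i$. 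With $\alpha'=\alpha-\epsilon/2$, set $K_i=T_{\alpha'}(A_i)$. Then $T_\alpha(P_i)\subseteq K_i\subseteq T_{\alpha-\epsilon}(P_i)$. The first inclusion shows that $\{K_i\}$ inherits the $(p,q)$-property from $\P$. The second shows that any piercing set for $\{K_i\}$ is a family of $k$-flats that $(\alpha-\epsilon)$-splits $\P$.

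The main step is to bound the dual shatter function of $\mathcal{K}=\{T_{\alpha'}(A): A\subset\reals^d,\ |A|=s\}$ by a polynomial of degree $(k+1)(d-k)$. We describe $K=T_{\alpha'}(A)$ semialgebraically in terms of $A$. A $k$-flat $\tau$ fails to $\alpha'$-split $A$ exactly when some open half-space containing $\tau$ holds more than $(1-\alpha')s$ points of $A$. Equivalently, there is a subset $B\subseteq A$ with $|B|\leq \alpha' s$ such that $\tau$ is disjoint from $\conv(B)$, possibly after a closedness adjustment. Indeed, $\tau$ and $\conv(B)$ can be strictly separated by a hyperplane through $\tau$ exactly when they are disjoint, since the separating hyperplane can be translated to contain $\tau$. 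So $K$ is the intersection, over the boundedly many subsets $B$ with $|B|\le\alpha' s$ (at most $2^s$ of them), of the sets $\{\tau:\tau\cap\conv(B)\neq\emptyset\}$. Each such set is a Boolean combination of boundedly many sign conditions of fixed-degree polynomials in the coordinates of $\tau$. For instance, $\tau$ meets $\conv(B)$ iff it meets some simplex spanned by at most $k'$ points of $B$, which is expressed through orientation determinants. Each polynomial is determined by a tuple of points of $A$.

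We then count. Given $N$ sets $K_1,\dots,K_N\in\mathcal{K}$, all of them are Boolean combinations of sign patterns of $O(N)$ polynomials of bounded degree on the $(k+1)(d-k)$-dimensional space $X$. By the Milnor–Thom/Warren bound (applied in local charts, or in a Grassmannian parametrization of $X$ by a real algebraic variety of that dimension), the number of distinct cells, and hence of distinct Venn regions, is $O(N^{(k+1)(d-k)})$. The constant depends on $s$ and thus on $\epsilon$ and $d$. This bound on the dual shatter function lets Matoušek's theorem apply with $q\geq (k+1)(d-k)+1$, which yields $C=C(p,q,d,k,\epsilon)$.

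We expect the main obstacle to be making this polynomial parametrization rigorous. Using affine charts of the space of $k$-flats, for example $\tau=\{x: x_{J}=Mx_{I}+b\}$ over the $\binom{d}{k}$ coordinate choices, we must get the cell count $O(N^{\dim X})$ rather than an exponent depending on auxiliary quantified variables. The description must therefore be quantifier-free in $\tau$, which is why we use explicit determinant conditions on simplices of $A$. Checking that open versus closed half-space issues do not spoil the inclusions $T_\alpha(P_i)\subseteq K_i\subseteq T_{\alpha-\epsilon}(P_i)$ is routine, since we have $\epsilon/2$ slack.
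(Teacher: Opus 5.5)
Your route is essentially the paper's. You replace each $P_i$ by an $\epsilon/2$-approximation of bounded size, so the region of splitting flats has bounded description complexity in a $(k+1)(d-k)$-dimensional parameter space. You then invoke Matou\v{s}ek's $(p,q)$-theorem through an $O(m^{(k+1)(d-k)})$ bound on the dual shatter function. Your implementation is somewhat cleaner in two respects:
\begin{itemize}
\item the affine charts $x_J=Mx_I+b$ cover \emph{all} $k$-flats, which spares you the paper's generic parametrization by $(k+1)$-tuples on random $(d-k)$-flats $\lambda_1,\dots,\lambda_{k+1}$, together with its probability-one, origin-avoidance and degenerate-tuple arguments;
\item describing $K_i$ through convex hulls of subsets of $A_i$ replaces the paper's quantified definition of $\Lambda_{\alpha,\epsilon,k}(P_i)$ via the dual hyperplane regions $\Gamma_{\alpha,\epsilon}(P_i)$.
\end{itemize}
However, your characterization of $K$ has the two sides swapped. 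The correct statement is: $\tau$ fails to $\alpha'$-split $A$ iff $\tau\cap\conv(B)=\emptyset$ for some $B\subseteq A$ with $|A\setminus B|<\alpha' s$. For one direction, take $B$ to be the points on the open side of a hyperplane $h\supseteq\tau$ whose opposite closed side is sparse; for the other, use your separation argument. Hence $K=\bigcap\{\tau:\tau\cap\conv(B)\neq\emptyset\}$ over all $B$ with $|A\setminus B|<\alpha's$, and no closedness adjustment is needed. With your threshold $|B|\le\alpha's$, singletons $B$ would already force $\tau$ to contain every point of $A$. The complexity count is unaffected by this correction.

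The genuine gap, which the paper's proof shares, is non-emptiness. Matou\v{s}ek's theorem (Theorem~\ref{Theorem:pq-boundedVC}) concerns families of \emph{non-empty} sets, and an empty $K_i$ cannot be pierced. For $p>q$, the $(p,q)$-property only guarantees that at most $p-q$ of the $K_i$ are empty. This is not a technicality: the statement fails without an extra hypothesis. Take $d=2$, $k=0$, $q=3$, $p=n=4$, $\alpha=\frac12$, $\epsilon=\frac1{10}$. Let $P_1$ be the vertex set of a triangle, and let $P_j=\{0,\pm j e_1\}$ for $j=2,3,4$. The origin $\frac12$-splits $P_2,P_3,P_4$, so the $(4,3,\frac12)$-property holds. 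But every point of the plane lies on a line having a closed side with at most one vertex of $P_1$, so no point $\frac25$-splits $P_1$. What your argument (like the paper's) does prove is weaker. Discard the $e\le p-q$ empty sets $K_i$; the remaining family still has the $(p-e,q)$-property. So all but at most $p-q$ of the $P_i$ are $(\alpha-\epsilon)$-split by $C$ flats. The full conclusion holds once you assume that every $P_i$ is $\alpha$-split by some $k$-flat. This is automatic when $p=q$, when $k=d-1$ (median hyperplanes), or when $\alpha\le\frac1{d+1}$ (flats through a centerpoint).
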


For $k=d-1$, Theorem \ref{Theorem:SplitFlats} yields a stronger form of Theorem \ref{Theorem:SplitHyperplanes} with arbitrary good $\beta$-splitting guarantee $\beta=\alpha-
\epsilon$. (However, the cardinality of the $\beta$-splitting family $Q$ may increase as $\epsilon$ approaches $0$.)
To establish Theorem \ref{Theorem:SplitFlats}, we pass to a higher dimensional space $\reals^D$, with $D=(k+1)(d-k)$, and use the $\epsilon$-approximation theorem of Haussler and Welzl \cite{HW87} to construct for each set $P_i$ a dual region $\Lambda_{\alpha,\epsilon,k}(P_i)$ which describes an ``$\epsilon$-rounded'' superset of $\alpha$-splitting $k$-flats. The crucial observation is that the regions $\Lambda_{\alpha,\epsilon,k}(P_i)$ have a bounded Vapnik-Chervonenkis dimension (i.e., VC-dimension) \cite{vc1971}, and their dual shatter function satisfies $\pi(m)=o(m^D)$.
 To deduce a $(p,q)$-theorem for the regions $\Lambda_{\alpha,\epsilon,k}(P_i)$ of $(\alpha-\epsilon)$-splitting $k$-flats, we will use the connection between the VC-dimension and the so called fractional Helly-numbers, that was observed by Matou\v{s}ek \cite{matousek2004bounded}. 

\medskip
\noindent {\bf Related work.} As was previously mentioned, there are remarkably few results on splitting {\it large} collections $\P=\{P_1,\ldots,P_n\}$ of finite point sets with hyperplanes, let alone $k$-flats.  
The so called Polynomial Ham-Sandwich Theorem of Stone and Tukey \cite{st1942ham} states that any such family $\P$ can be equipartitioned by a hypersurface of degree $O(n^{1/d})$.
A somewhat related SODA 2023 result of Har-Peled and Zheng \cite{harpeled2022} yields an efficient approximation algorithm for the {\it optimisation problem} of finding the smallest possible set $H$ of hyperplanes whose arrangement $\reals^d\setminus\left(\bigcup H\right)$ simultaneously subdivides all sets $P_i\in \P$ into parts that encompass  {\it at most} a prescribed fraction $\gamma_i$ of each set $P_i$, with $\gamma_i \in (0,1)$. (However, it is possible that no set $P_i$ is $(1-\gamma_i)$-split by even a single hyperplane $h$ in the target set $H$.)

\bigskip
\noindent {\bf Paper organization.} The rest of this paper is organized as follows. In Section \ref{Sec:Prelim} we introduce the fundamental geometric notions and facts that underlie the proofs of Theorems \ref{Theorem:SplitHyperplanes} and \ref{Theorem:SplitFlats}. These include geometric range spaces and VC-dimension \cite{vc1971,matousek2013lectures,matousek1999geometric}, $\epsilon$-nets, $\epsilon$-approximations \cite{HW87} and fractional Helly numbers. We also present the fundamental relation between fractional Helly numbers, $(p,q)$-type theorems, and dual shatter functions of set systems with bounded VC-dimension, which was observed by Matou\v{s}ek \cite{matousek2004bounded}.
In Section \ref{Sec:Hyperplanes}, we use Matou\v{s}ek's simplicial partitions \cite{matousek1992reporting} to define the aforementioned sets $\tilde{S}_\alpha(P)$, and then derive Theorem \ref{Theorem:SplitHyperplanes} from Theorem \ref{Theorem:AlonKalai}.
In Section \ref{Sec:Flats}, we use the previously mentioned connection between fractional Helly numbers and the VC-dimension \cite{matousek2004bounded} in order to establish Theorem \ref{Theorem:SplitFlats}. In Section \ref{Sec:Conclude}, we conclude with closing remarks and open problems. 

\section{Preliminaries}\label{Sec:Prelim}

To facilitate the proofs of Theorems \ref{Theorem:SplitHyperplanes} and \ref{Theorem:SplitFlats}, let us list several fundamental facts from discrete geometry. 

\begin{defn}
\label{Def:centerpoint}

Let $X$ be a finite set of points in $\reals^d$. A point $q\in \reals^d$ (not necessarily in $X$) is called a \textit{\textbf{centerpoint}} for $X$ if every closed half-space containing $q$ contains at least $\abs{X}/(d + 1)$ points of $X$.
  
\end{defn}

\begin{theorem}[Rado's Center Point Theorem \cite{rado1946theorem}]
  \label{Theorem:Centerpoint}
  For every finite point set $X$ in $\reals^d$, there exists a centerpoint.
\end{theorem}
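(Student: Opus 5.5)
We prove Rado's theorem by reducing it to Helly's theorem, applied to the family of ``heavy'' convex hulls of subsets of $X$. Put $n=\abs{X}$. Consider all closed half-spaces $H$ whose open complement $\reals^d\setminus H$ contains more than $\frac{d}{d+1}n$ points of $X$. The idea is that a point is a centerpoint exactly when it avoids every such ``bad'' half-space. A point $q$ fails to be a centerpoint iff some closed half-space $H\ni q$ contains fewer than $n/(d+1)$ points. In that case the complementary open half-space $U=\reals^d\setminus H$ contains more than $\frac{d}{d+1}n$ points and misses $q$. It therefore suffices to exhibit a point lying in $\conv(X\cap U)$ for every open half-space $U$ with $\abs{X\cap U}>\frac{d}{d+1}n$. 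Indeed, $\conv(X\cap U)\subseteq U$, so such a point lies in every such $U$.

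Define the family
\[
\mathcal{K}=\Bigl\{\conv(Y)\;\Bigm|\; Y\subseteq X,\ \abs{Y}>\tfrac{d}{d+1}n\Bigr\}.
\]
This family is finite and consists of compact convex sets (polytopes). By the previous paragraph, any point of $\bigcap\mathcal{K}$ is a centerpoint. We only need the sets $Y=X\cap U$, but intersecting over the larger family is harmless, since that intersection is smaller.

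To show that $\bigcap\mathcal{K}\neq\emptyset$, we invoke Helly's theorem as recalled in the introduction. If $\abs{\mathcal{K}}\le d$, we pad the family with repeated copies, or argue directly, so it suffices to check that any $d+1$ members $\conv(Y_1),\dots,\conv(Y_{d+1})$ share a point. This follows by counting. Each complement $X\setminus Y_j$ has fewer than $\frac{1}{d+1}n$ points, so the union of the $d+1$ complements has fewer than $n$ points. Hence some $x\in X$ lies in all the $Y_j$, and so $x\in\bigcap_j\conv(Y_j)$. Helly's theorem then gives a point in $\bigcap\mathcal{K}$, and that point is a centerpoint.

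The only delicate step is handling the strict versus non-strict inequalities so that the counting works. We need the complements to satisfy $\abs{X\setminus Y_j}<\frac{n}{d+1}$ strictly, which is exactly what the definition of a non-centerpoint provides, namely a closed half-space with \emph{fewer} than $n/(d+1)$ points. Once the thresholds are chosen this way, the union of $d+1$ complements is strictly smaller than $X$, and the argument closes without any rounding issues.
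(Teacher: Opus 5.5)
Your proof is correct. The paper does not prove this statement (it is quoted from Rado), but your argument is the standard proof, as in \cite[Ch.~1.4]{matousek2013lectures}: you characterize centerpoints as the points lying in every open half-space that contains more than $\frac{d}{d+1}\abs{X}$ points of $X$, then apply Helly's theorem to the finitely many polytopes $\conv(Y)$ with $\abs{Y}>\frac{d}{d+1}\abs{X}$, any $d+1$ of which share a point of $X$ by counting complements. That characterization is exactly the one the paper relies on when it describes $S_\alpha(P)$ as an intersection of open half-spaces.
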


\noindent{\bf Helly and fractional Helly numbers.}
The {\it Helly number} of a family $\F$ of sets is the smallest possible integer $k$ for which the following statement holds:
Any finite subset $\G\subseteq \F$ so that every $k$ members of $\G$ intersect, must have a non-empty intersection $\bigcap \G$. If no such integer $k$ exists, we say that the Helly number of $\F$ is $\infty$.

According to Helly's Theorem \cite{helly1923mengen}, the Helly number of the family $\F_d$ of all convex sets in $\reals^d$ is $d+1$.

The {\it fractional Helly number} of a family $\F$ of sets if the smallest possible integer $k$ for which the following statement holds: for every $\alpha>0$ there exists a number $\beta>0$ with the following property: 
\begin{itemize}
  \item[] For every finite subfamily $\G=\{F_1,\dotsc, F_n\}\subseteq \F$, if at least $\alpha \binom{n}{k}$ of the $k$-tuples $\H\in {\G\choose k}$ have non-empty intersection $\bigcap \H$, then there exists an element common to at least $\beta n$ sets $F_i$. 
\end{itemize}

The fractional Helly number is set to $\infty$ if no integer $k$ meets the aforementioned criteria.
According to the 1979 theorem of Katchalski and Liu \cite{katchalski1979problem}, the fractional Helly number of the family $\F_d$ is also $d+1$; its subsequent refinement by Kalai \cite{kalai1984intersection} yields the fractional guarantee $\beta=\beta(\alpha,d)$ that approaches $1$ as $\alpha$ tends to $1$.

\medskip
\noindent {\bf Geometric range spaces.} A {\textbf{\textit range space}} $\R$ is a pair $(X,\F)$, where $X$ is a (possibly infinite) set of geometric objects (e.g., points or hyperplanes) and $\F$ is a collection of subsets of $X$. 
Any finite subset $A\subseteq X$ yields a hypergraph $(A,\F_A)$, where $\F_A$ denotes the restriction $\{f\cap A \mid f\in \F\}$ of $\F$ to $A$.

\noindent {\bf VC-dimension.} We say a finite subset $A \subseteq X$ is {\it shattered} if the restriction $\F_A$ realizes all the possible subsets of $A$, so that $\left|\F_A\right|= 2^{|A|}$. 
    The {\it VC-dimension} of the range space $\R=(X,\F)$ is the maximum size of a subset $A \subset X$ that is shattered by $\F$ \cite[Ch. 10.2]{matousek2013lectures}. If no such maximum integer exists, we say $\R$ has {\it infinite} VC-dimension. Formally,  
        \[\text{VC-dim}\left(\R\right) = \sup \set{ \abs{A} : A \subseteq X \text{ is shattered by } \F}.\]

\noindent {\bf Shatter function.} Any range space $\R=(X,\F)$ determines {\it the shatter function} $\pi_\R: \mathbb{N} \to \mathbb{N}$ which is given by $\pi_\R(m)=\max_{A\subseteq X, \abs{A}=m}\left|\F_A\right|$. According to the fundamental lemma of Sauer and Shelah \cite{Sauer72}, any range space $\R$ of bounded VC-dimension $\text{VC-dim}(\R)=D<\infty$ yields polynomial shatter function $\pi_\R(m)=O\left(m^D\right)$.

In addition to the (primal) shatter function $\pi_\R$, any range space $\R=(X,\F)$ determines the {\it dual shatter function} $\pi_\R^\ast: \mathbb{N} \to \mathbb{N}$, so that $\pi_\R^\ast(m)$ is the maximum number of nonempty fields of the Venn diagram of $m$ sets of $\F$. More formally, two points $x,y \in X$ are called {\it equivalent} with respect to sets $F_1, \dots, F_m$ if $\{ i \in [m] : x \in F_i \} = {\{i \in [m] : y \in F_i \}}$. 
	Then $\pi_\R^\ast(m)$ is the maximum possible number of such equivalence classes over
	all choices of $F_1, \dots, F_m \in \F$. It is well known that the shatter function is polynomial if and only if the dual shatter function is.

\noindent {\bf $\bm{\epsilon}$-nets and $\bm{\epsilon}$-approximations.} Let $(A,E)$ be a hypergraph (so that $E\subseteq 2^A$) and $0<\epsilon\leq 1$. A subset $N \subseteq A$ is called an $\epsilon$-net for $(A,E)$ if it intersects every edge $F\in E$ with $|F|\geq \epsilon |A|$.

A subset $N \subseteq X$ is called an {\it $\epsilon$-approximation} for $(A,E)$ if for every $F\in \F$, we have that 
\[
\left|\frac{|F\cap A|}{|A|}-\frac{|F\cap N|}{|N|}\right|<\epsilon.
\]

\begin{theorem}[$\epsilon$-net Theorem \cite{HW87}]
\label{Theorem:Eps_net}
  There is a constant $c$ such that for any range space $\R=(X,\F)$ of finite VC-dimension $D$, any $0 < \epsilon\leq 1$, and any subset $A\subseteq X$, there exists an $\epsilon$-net for $(A,\F_A)$ whose cardinality is at most $c\cdot  \frac{D}{\epsilon}\ln\frac{D}{\epsilon}$.
\end{theorem}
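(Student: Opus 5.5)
The statement is the classical Haussler--Welzl $\epsilon$-net theorem. I would prove it with the standard double-sampling argument, applied to the finite hypergraph $(A,\F_A)$. Write $a=\abs{A}$ and $m=c\frac{D}{\epsilon}\ln\frac{D}{\epsilon}$. If $m\geq a$, take $N=A$ and there is nothing to prove. Otherwise, draw a random sample $N$ of $m$ independent uniform elements of $A$, with repetitions allowed; discarding duplicates only shrinks $N$. The goal is to show that the event
\[
E_1=\{\exists F\in\F_A:\ \abs{F}\geq \epsilon a,\ F\cap N=\emptyset\}
\]
has probability strictly less than $1$. Some realization of $N$ is then an $\epsilon$-net of the required size.

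The first step is symmetrization. Draw a second independent sample $T$ of $m$ uniform elements, and let $E_2$ be the event that some heavy range $F$ (with $\abs{F}\geq\epsilon a$) satisfies $F\cap N=\emptyset$ and $\abs{F\cap T}\geq \epsilon m/2$. Fix $N\in E_1$ together with a witness $F$. Then $\abs{F\cap T}$ is binomial with mean at least $\epsilon m$. By Chebyshev's inequality, for $\epsilon m\geq 8$, it is at least $\epsilon m/2$ with probability at least $1/2$. Hence $\Pr[E_1]\leq 2\Pr[E_2]$.

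The second step bounds $\Pr[E_2]$. Condition on the multiset $S=N\cup T$ of $2m$ points, and view $(N,T)$ as a uniformly random split of $S$ into two halves. The key use of the VC-dimension bound comes here. By the Sauer--Shelah lemma, the ranges restricted to $S$ form at most $\pi(2m)=O((2m)^D)$ distinct subsets, so a union bound over them is affordable. Fix one such trace $F\cap S$ containing at least $\epsilon m/2$ of the $2m$ sample slots. The probability that all of these slots land in $T$ and none in $N$ is at most
\[
\binom{2m-\epsilon m/2}{m}\Big/\binom{2m}{m}\leq 2^{-\epsilon m/2}.
\]
Therefore $\Pr[E_2]\leq \sum_{i\leq D}\binom{2m}{i}\,2^{-\epsilon m/2}$.

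The final step, which I expect to be the only fiddly point, is the calculation showing that $2\sum_{i\leq D}\binom{2m}{i}2^{-\epsilon m/2}<1$ when $m=c\frac{D}{\epsilon}\ln\frac{D}{\epsilon}$ for a suitable absolute constant $c$. I would bound the sum by $(2em/D)^D$ for $D\geq1$, take logarithms, and verify that $D\ln(2em/D)<\epsilon m\ln 2/2-1$. With $m=cD\epsilon^{-1}\ln(D/\epsilon)$, the left side is $D\ln\bigl(2ec\,\epsilon^{-1}\ln(D/\epsilon)\bigr)=O(D\ln(D/\epsilon)+D\ln c)$. The right side is $\Omega(cD\ln(D/\epsilon))$, so a large enough $c$ wins. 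I would treat the degenerate regime where $D/\epsilon$ is close to $1$ separately, for instance by replacing $\ln\frac{D}{\epsilon}$ with $\max(1,\ln\frac{D}{\epsilon})$ or by adjusting $c$. This also ensures the condition $\epsilon m\geq 8$ needed for Chebyshev in the first step.
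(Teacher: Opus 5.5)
Your proposal is correct. It is the standard Vapnik--Chervonenkis/Haussler--Welzl double-sampling argument, which is exactly the proof behind this statement; the paper itself gives no proof and only cites Haussler and Welzl. On the degenerate regime, the fix has to be to the statement (e.g.\ replacing $\ln\frac{D}{\epsilon}$ by $\max(1,\ln\frac{D}{\epsilon})$), not to $c$: for $X=\{x\}$ and $\F=\{\emptyset,\{x\}\}$ (so $D=1$), every $\epsilon$-net needs one point, while $c\cdot\frac{1}{\epsilon}\ln\frac{1}{\epsilon}\to 0$ as $\epsilon\to 1$.
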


\begin{theorem}[$\epsilon$-approximation Theorem \cite{vc1971,HW87}]
  \label{Theorem:Eps_apprx}
   There is a constant $c$ such that for any range space $\R=(X,\F)$ of finite VC-dimension $D$, any $0 < \epsilon\leq 1$, and any subset $A\subseteq X$, there exists an $\epsilon$-approximation for $(A,\F_A)$ whose cardinality is at most $c \cdot \frac{D}{\epsilon^2}\ln\frac{D}{\epsilon}$.
  
\end{theorem}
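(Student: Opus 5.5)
The plan is to prove the statement by random sampling combined with the classical Vapnik--Chervonenkis symmetrization (``double sampling'') argument. The Sauer--Shelah lemma is the only combinatorial input. Fix $A\subseteq X$. If $\abs{A}$ is at most the target size, take $N=A$. Otherwise, let $N$ be a sample of $s$ independent uniform draws from $A$ (a multiset; this can be corrected at the end). The aim is to show that for $s = c\cdot\frac{D}{\epsilon^2}\ln\frac{D}{\epsilon}$ the probability that some $F\in\F$ has $\left|\frac{|F\cap A|}{|A|}-\frac{|F\cap N|}{|N|}\right|\ge\epsilon$ is less than $1$. A naive union bound over the at most $\pi_\R(\abs{A})=O(\abs{A}^D)$ traces $F\cap A$ fails, because its cost depends on $\abs{A}$. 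The whole point of the argument is to replace $\abs{A}$ by $2s$.

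The first step is symmetrization. Draw a second independent sample $N'$ of size $s$. For a fixed $F$ with $\abs{\frac{|F\cap N|}{s}-\mu(F)}\ge\epsilon$, where $\mu(F)=|F\cap A|/|A|$, Chebyshev's inequality gives $\abs{\frac{|F\cap N'|}{s}-\mu(F)}\le\epsilon/2$ with probability at least $1/2$ once $s\ge 2/\epsilon^2$. Hence
\[
\Pr[\exists F:\ \text{$N$ is $\epsilon$-bad for $F$}]\le 2\Pr\!\left[\exists F:\ \abs{\tfrac{|F\cap N|}{s}-\tfrac{|F\cap N'|}{s}}\ge \tfrac{\epsilon}{2}\right].
\]

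The second step handles the event on the right. Condition on the multiset $Z=N\cup N'$ of $2s$ points, and generate the split into $N,N'$ by independently swapping each of the $s$ pairs $(z_i,z_i')$ with probability $1/2$. By Sauer--Shelah, $F$ induces at most $\pi_\R(2s)=O((2s)^D)$ distinct traces on $Z$. For a fixed trace, the difference $|F\cap N|-|F\cap N'|$ is a sum of at most $s$ independent $\pm1$ variables. Hoeffding's inequality bounds the probability that it exceeds $\epsilon s/2$ by $2e^{-\epsilon^2 s/8}$. A union bound then gives a failure probability of at most $O((2s)^D)\,e^{-\epsilon^2 s/8}$, and this is $<1/2$ as soon as $s\ge \frac{C D}{\epsilon^2}\ln\frac{D}{\epsilon}$ for a suitable absolute constant $C$. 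This is a routine calculation: $\epsilon^2 s/8 \gtrsim D\ln s$ is implied by $s\approx \frac{D}{\epsilon^2}\ln\frac{D}{\epsilon}$. Some sample therefore succeeds, which yields an $\epsilon$-approximation of the required cardinality. If repeated points are an issue, they are either allowed as a multiset or removed by sampling without replacement, for which the same Hoeffding bound holds.

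The main obstacle is the symmetrization step. One has to make sure that the ghost-sample argument is valid when conditioning on the bad range $F$, which is chosen depending on $N$. The fix is to fix such an $F$ as a function of $N$ only, and then use the independence of $N'$. A second, milder subtlety is the strict inequality in the definition, which can be handled by running the argument with $\epsilon/2$ in place of $\epsilon$ at the cost of the constant $c$.
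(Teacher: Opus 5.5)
The paper gives no proof of this statement and simply imports it from Vapnik--Chervonenkis and Haussler--Welzl. Your argument (ghost sample with Chebyshev, conditioning on the $2s$ drawn points, random swaps with Hoeffding, and a union bound over the at most $\pi_\R(2s)$ traces given by Sauer--Shelah) is exactly the classical Vapnik--Chervonenkis proof behind that citation, and it is correct.

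Two small caveats. First, the closing calculation $\epsilon^2 s/8\geq D\ln(2s)+O(1)$ requires $\ln(D/\epsilon)$ to be bounded away from $0$ (e.g.\ $D\geq 2$). For $D=1$ and $\epsilon\to 1$, the stated bound $c\frac{D}{\epsilon^2}\ln\frac{D}{\epsilon}$ itself degenerates to $0$; this is a defect of the statement rather than of your argument. Second, to get a genuine subset by sampling without replacement, replace the independent pair swaps, which no longer preserve the joint distribution of $(N,N')$. Instead, condition on the multiset $N\uplus N'$: under this conditioning, the points not shared by $N$ and $N'$ are split into two equal halves uniformly at random, so Hoeffding's inequality for sampling without replacement gives the same bound $2e^{-\epsilon^2 s/8}$.
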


As was observed by Matou\v{s}ek, if $\R=(X,\F)$ is a range space with a bounded VC-dimension, then the fractional Helly number of the set family $\F$ must be bounded as well. 
Combined with the existence of small-size $\epsilon$-nets, this yields the following $(p,q)$-type theorem.

\begin{theorem}[$(p,q)$-Theorem for bounded VC-Dimension \cite{matousek2004bounded}]\label{Theorem:pq-boundedVC}
  \label{Theorem:boundedVC_pq} 
  Let $\R=(X,\F)$ be a range space whose dual shatter function satisfies $\pi_\R^\ast(m) = o(m^k)$ for some integer $k$, and let $p \geq k$. There exists a finite number $C(p,k,d)$ such that the following statement holds: any finite subfamily $\G \subseteq \F$ of non-empty sets that satisfies the $(p,k)$-property, admits a transversal of size at most $C$.
\end{theorem}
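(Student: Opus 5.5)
The plan follows the Alon--Kleitman scheme: (p,k)-property $\Rightarrow$ many intersecting $k$-tuples $\Rightarrow$ (fractional Helly) a point in a linear fraction of the sets $\Rightarrow$ (LP duality) a bounded fractional transversal $\Rightarrow$ ($\epsilon$-nets) a bounded transversal. Only the fractional Helly step genuinely uses the hypothesis $\pi_\R^\ast(m)=o(m^k)$. All other steps are standard once that hypothesis also gives bounded VC-dimension for the dual range space. Throughout, the constant $C$ also depends on the rate in $\pi^\ast_\R(m)=o(m^k)$, not only on $p$ and $k$.

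\textbf{Counting step.} Let $\G=\{F_1,\dots,F_n\}$ have the $(p,k)$-property with $n\ge p$. Every $p$-subset of $\G$ contains an intersecting $k$-tuple, and every $k$-tuple lies in $\binom{n-k}{p-k}$ of the $p$-subsets. Double counting therefore shows that at least $\binom{n}{p}/\binom{n-k}{p-k}=\binom{n}{k}/\binom{p}{k}$ of the $k$-tuples of $\G$ intersect. So a fraction $\alpha_0=\binom{p}{k}^{-1}$ of the $k$-tuples intersect.

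\textbf{Fractional Helly step (main obstacle).} I must show that the fractional Helly number of $\F$ is at most $k$. That is, if $\alpha\binom{n}{k}$ of the $k$-tuples intersect, some point lies in $\beta n$ of the sets, with $\beta=\beta(\alpha,\pi^\ast_\R)>0$. Following Matou\v{s}ek, I would proceed as follows.
\begin{enumerate}
\item Apply the Erd\H{o}s supersaturation theorem for $k$-partite $k$-uniform hypergraphs. A positive density of intersecting $k$-tuples then yields many copies of $K^{(k)}(m,\dots,m)$ in the ``intersection hypergraph''. Here $m$ is an arbitrarily large constant and $A_1,\dots,A_k$ are disjoint classes of $m$ sets each, every transversal $k$-tuple of which intersects.
\item Each of the $m^k$ transversal tuples is witnessed by a nonempty cell of the Venn diagram of the $km$ sets in $A_1\cup\dots\cup A_k$. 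A cell $c$ witnesses exactly $\prod_i |c\cap A_i|$ of these tuples, where $|c\cap A_i|$ is the number of sets of $A_i$ containing $c$.
\item There are only $\pi^\ast_\R(km)=o(m^k)$ cells. Hence some cell witnesses $\omega(1)$ tuples, so its product is large and some factor $|c\cap A_i|$ grows with $m$. A sharper averaging should give a cell contained in a constant fraction of $A_1\cup\dots\cup A_k$.
\item Transfer this local conclusion to a point lying in $\beta n$ sets of $\G$. Here I would run the argument on a random $km$-sample of $\G$ and average over the many copies of $K^{(k)}(m,\dots,m)$: if every point lay in fewer than $\beta n$ sets, a typical sampled copy would have all cells of small depth. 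This contradicts the previous item once $m$ is chosen large in terms of $\alpha$ and the $o(\cdot)$ rate.
\end{enumerate}
Making this transfer quantitatively correct is the delicate part of the proof.

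\textbf{Transversal step.} Combining the two previous steps, every subfamily of $\G$ of size at least $p$ (a multiset version is needed, obtained by duplicating sets) has a point in a $\beta$-fraction of its members. Following Alon and Kleitman, LP duality converts this into a bound on the fractional transversal number: $\tau^\ast(\G)\le 1/\beta$. Concretely, a rational optimal fractional matching of $\G$ is realized as a multiset, and the fractional Helly conclusion applied to that multiset bounds $\nu^\ast=\tau^\ast$.

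Next, a polynomial dual shatter function implies bounded VC-dimension of the dual range space, whose ground set is $\G$ and whose ranges are $\{F\in\G: x\in F\}$ for points $x$. Apply Theorem~\ref{Theorem:Eps_net} to the weighted point measure given by an optimal fractional transversal, with $\epsilon=1/\tau^\ast$. Every set of $\G$ has weight at least $1$, hence relative weight at least $\epsilon$. The resulting $\epsilon$-net is a set of $O(\tau^\ast\log\tau^\ast)$ points that hits every member of $\G$. This bound is independent of $n$ and gives the constant $C$.
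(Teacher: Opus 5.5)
The paper does not prove this statement. It quotes it from Matou\v{s}ek and only indicates the route: the dual shatter bound gives fractional Helly number $k$, and the Alon--Kleitman LP-duality and $\epsilon$-net argument then finishes. That is exactly your skeleton, and your counting and transversal steps are standard and fine. One slip: the $\epsilon$-net must be taken in the primal system, with ground set $X$ and ranges the members of $\G$. Its VC-dimension is bounded because it is less than $2^{d^\ast+1}$, where $d^\ast$ is the dual VC-dimension you bounded.

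The genuine gap is in items 3 and 4 of the fractional Helly step. Counting cells only yields a cell $c$ with $\prod_i |c\cap A_i|\ge m^k/\pi^\ast_\R(km)$. This means depth tending to infinity but possibly $o(m)$; for $\pi^\ast_\R(m)=\Theta(m^{k-1})$ you only get about $m^{1/k}$. The ``constant fraction of $A_1\cup\dots\cup A_k$'' claim is true, but it is itself an instance of the theorem you are proving: a $k!/k^k$ fraction of the $k$-tuples of those $km$ sets intersect. So no averaging will deliver it.

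Even granting item 3, the transfer in item 4 fails quantitatively. Supersaturation only guarantees that a $\gamma(\alpha,m)$-fraction of $km$-samples are copies of $K^{(k)}(m,\dots,m)$, and $\gamma$ can be as small as roughly $\alpha^{m^k}$. On the other hand, even if every point has global depth below $\beta n$, a random $km$-sample contains a point of depth at least $\epsilon km$ with probability of order $e^{-\Theta(m)}$; a single point of global depth $\beta n$ already achieves this. For $k\ge 2$, all copies may therefore lie among these atypical samples, and ``a typical sampled copy has all cells shallow'' cannot be derived.

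The missing idea is to spend the $\omega(1)$-depth cell on raising the order of the tuples rather than on linear depth, and to sample only at a fixed size. Fix $K>2k$ and choose $m$ with $m^k/\pi^\ast_\R(km)>(K-1)^k$. Then every copy has a cell lying in at least $K$ of its sets, and so contains an intersecting $K$-subset. A fixed $K$-subset lies in only $O(n^{km-K})$ labelled copies, so double counting against the $\gamma n^{km}$ copies gives $\Omega(n^K)$ intersecting $K$-tuples of the whole family. Here the tiny $\gamma$ costs nothing.

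Now let $Y$ be a random $N$-subset of $\G$ with $N=O(\beta^{-2}\log(1/\beta))$. With large constant probability $Y$ is a $\beta$-approximation for the dual range space, so every point lies in fewer than $2\beta N$ members of $Y$. Hence at most $\pi^\ast_\R(N)\binom{2\beta N}{K}\le CN^k(4\beta)^K\binom{N}{K}$ of its $K$-subsets intersect. Since $K>2k$, this is a vanishing fraction of $\binom{N}{K}$ as $\beta\to 0$. That contradicts the fact that $Y$ inherits a positive density of intersecting $K$-tuples with positive probability.

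This two-stage argument makes the fractional Helly step work: first supersaturation plus pigeonhole lifts the order of the tuples, then a bounded-size sample makes $\pi^\ast_\R$ a constant. With it in place, the rest of your plan goes through.
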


\noindent {\bf Semi-algebraic sets.} We recall that a set $A \subset \reals^d$ is {\it semialgebraic} if it can be defined by a Boolean combination of polynomial inequalities; that is, if 
	\[A = \{ x \in \reals^d : \Phi \left( f_1(x) \geq 0, f_2(x) \geq 0, \dots, f_s(x) \geq 0 \right) \}\] 
	where $\Phi$ is a Boolean formula and $f_1, \dots, f_s \in  \reals[x_1, \dots, x_d]$ are polynomials in the $d$ coordinates $x_1,\dots,x_d$ of $\reals^d$. We say that the description complexity of $A$ is bounded by an integer $c$ if it admits a semi-algebraic representation of the above form with $\max\{\deg(f_1),\dots,\deg(f_s),s\}\leq c$.

The definition of a semialgebraic set may also involve quantifiers. However, by a well-known result of Tarski, quantifiers can be eliminated; that is, each such set has an equivalent quantifier-free definition; See, e.g., \cite{bochnak2013real} for a comprehensive discussion of semialgebraic sets and quantifier elimination.

Let $\F_{d,c}$ denote the family of all the semialgebraic sets in $\reals^d$ whose respective description complexities are bounded by $c$.
Standard estimates on the number of sign patterns of real polynomials (due to Oleinik, Petrovskii, Milnor, Thom; see, e.g., \cite{pollacknumber} for precise results and references) imply that the shatter function of the range space $\R=(\reals^d,\F_{d,c})$ must satisfy $\pi_\R^\ast(m) \leq Cm^d$ for some constant $C = C(d,c)$. 
Plugging this observation into Theorem \ref{Theorem:boundedVC_pq} yields the following immediate corollary.

\begin{theorem}\label{Theorem:pq-semi}
For any positive integers $p$, $c$, and $d$ that satisfy $p\geq d+1$, there is a constant $C=C(p,c,d)$ with the following property:

 Any finite subfamily $\G \subseteq \F_{d,c}$ that satisfies the $(p,d+1)$-property admits a transversal of size at most $C$.
\end{theorem}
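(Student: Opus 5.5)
The plan is to obtain Theorem \ref{Theorem:pq-semi} directly from Theorem \ref{Theorem:boundedVC_pq}. The only thing to verify is that the range space $\R=(\reals^d,\F_{d,c})$ has a dual shatter function of order $o(m^{d+1})$.

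First, fix $m$ sets $A_1,\dots,A_m\in\F_{d,c}$. Each $A_j$ is a Boolean combination of at most $c$ polynomial inequalities $f_{j,1}\geq 0,\dots,f_{j,s_j}\geq 0$, each of degree at most $c$. Altogether this gives at most $cm$ polynomials in $d$ variables of degree at most $c$. Whether a point $x$ lies in $A_j$ is determined by the sign vector $(\mathrm{sgn} f_{j,i}(x))_i$. Therefore the equivalence class of $x$ with respect to $A_1,\dots,A_m$ is determined by the full sign pattern of all $cm$ polynomials at $x$. Hence $\pi^\ast_\R(m)$ is at most the number of realizable sign patterns (nonempty sign conditions) of $N=cm$ real polynomials of degree at most $c$ in $\reals^d$. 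By the Oleinik--Petrovskii--Milnor--Thom bounds, in the form of \cite{pollacknumber}, this number is at most $(O(cN)/d)^d$. This gives $\pi^\ast_\R(m)\leq C(d,c)\,m^d$, which is the estimate quoted just before the statement.

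Second, since $C(d,c)m^d=o(m^{d+1})$, the hypothesis of Theorem \ref{Theorem:boundedVC_pq} holds with $k=d+1$. The assumption $p\geq d+1=k$ matches its requirement $p\geq k$. We then take any finite subfamily $\G\subseteq\F_{d,c}$ with the $(p,d+1)$-property. Nonemptiness of its members is automatic: if some member were empty, then any $p$-tuple containing it would still have to contain $d+1$ intersecting members, and one can simply discard empty sets, or note that the property forces members to be nonempty when $p=d+1$. Theorem \ref{Theorem:boundedVC_pq} then yields a transversal of size at most $C(p,d+1,\cdot)$, and this bound depends only on $p$, $d$, and $c$, because $c$ enters only through the constant $C(d,c)$ in the dual shatter bound.

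The main point needing care is the first step. The equivalence classes in the dual shatter function must be bounded by sign patterns rather than by connected components. We also need the constant to be uniform in the particular sets chosen, depending only on $(d,c)$. This holds because the sign-condition bound depends only on the number of polynomials, their maximum degree, and the dimension. A minor subtlety is the handling of empty members in the discard step, which I would treat as above.
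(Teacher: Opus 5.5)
Your proposal is the paper's argument: the paper also bounds $\pi^\ast_\R(m)\le C(d,c)\,m^d=o(m^{d+1})$ via the sign-pattern estimates of \cite{pollacknumber} and plugs $k=d+1$ into Theorem~\ref{Theorem:boundedVC_pq}. Your extra care in bounding Venn-diagram cells by realizable sign conditions, rather than by connected components, is correct.

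The one false remark is that nonemptiness is automatic. For $p>d+1$, take one empty set (which lies in $\F_{d,c}$) together with many distinct sets through a common point. This family has the $(p,d+1)$-property but admits no transversal at all. Discarding the empty set does not give a transversal of $\G$, and for $p=d+1$ the property is vacuous when $|\G|<p$. So nonemptiness of the members must be taken as an implicit hypothesis, exactly as in Theorem~\ref{Theorem:boundedVC_pq}, rather than derived.
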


\section{Proof of Theorem \ref{Theorem:SplitHyperplanes}}\label{Sec:Hyperplanes}

\noindent {\bf Definition.} Let $P$ be a finite point set in $\reals^d$, and $0< \beta\leq \alpha \leq 1/2$. We say that a set $\tilde S\subseteq \reals^d$ is an {$(\alpha,\beta)$-core} of $P$ if for any $0\leq k\leq d-1$ the following conditions are met:

\begin{itemize}
	\item $\tilde S$ is intersected by any $\alpha$-splitting $k$-flat of $P$, and
	\item $P$ is $\beta$-split by any $k$-flat that intersects $\tilde S$.
\end{itemize}

\begin{lemma}
\label{Lemma:Core}

For any integer $d\geq 2$ and any fraction $0< \alpha \leq 1/2$, there is a fraction $\beta=\beta(\alpha,d)$ in $(0,\alpha)$ such that any point set $P$ in $\reals^d$ has an $(\alpha,\beta)$-core $\tilde{S}_\alpha(P)$ which is a compact and convex set.

\end{lemma}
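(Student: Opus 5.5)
The plan is to first reduce the statement to hyperplanes, and then build $\tilde S_\alpha(P)$ from a simplicial partition of $P$.

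\textbf{Reduction to hyperplanes.} Let $\tilde S$ be compact and convex, and suppose it satisfies two conditions: (i) every hyperplane that misses $\tilde S$ fails to $\alpha$-split $P$, and (ii) every hyperplane that meets $\tilde S$ $\beta$-splits $P$. Then $\tilde S$ is an $(\alpha,\beta)$-core for every $0\le k\le d-1$. Indeed, if a $k$-flat $\tau$ meets $\tilde S$, then every hyperplane through $\tau$ meets $\tilde S$, so $P$ is $\beta$-split by $\tau$ by (ii). If $\tau$ misses $\tilde S$, then, since $\tau$ is a flat and $\tilde S$ is compact and convex, some hyperplane $h\supseteq \tau$ misses $\tilde S$; this is the standard separation of a flat from a disjoint compact convex set. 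By (i), $h$ does not $\alpha$-split $P$, and hence neither does $\tau$. So it suffices to construct a compact convex $\tilde S$ satisfying (i) and (ii).

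\textbf{Construction.} Let $n=|P|$, and fix a large constant $r=r(\alpha,d)$. Apply Matou\v{s}ek's simplicial partition to write $P=P_1\cup\dots\cup P_r$, where $|P_j|=\Theta(n/r)$ and $P_j\subset\Delta_j$ for simplices $\Delta_j$. The crossing property is that every hyperplane meets the interiors of at most $O(r^{1-1/d})$ of the $\Delta_j$, which is a vanishing fraction of the $r$ parts once $r$ is large. With respect to any hyperplane $h$, every non-crossed simplex lies entirely on one side. So, up to an error of $O(r^{-1/d})n$ points, the two sides of $h$ are determined by how $h$ splits the $r$ simplices as whole objects.

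I would define $\tilde S$ as the intersection of all closed half-spaces $H$ such that the open complement $\reals^d\setminus H$ fully contains simplices carrying fewer than $(\alpha-\delta)n$ points of $P$. Here $\delta=\delta(\alpha,d)$ is chosen so that the crossing error $O(r^{-1/d})$ is at most $\delta/10$. Up to combinatorial type there are only finitely many such half-spaces, because only the partition of the vertex set of the simplices matters. Hence $\tilde S$ is a convex polytope and in particular compact; the bounded combinatorics also ensures the intersection is effectively finite. Nonemptiness follows from a centerpoint argument (Theorem \ref{Theorem:Centerpoint}) applied to the weighted family of simplices, since $\alpha-\delta<1/2$. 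If this falls short for $\alpha$ close to $1/2$, I would instead take the convex hull of the union of appropriate simplices, which makes nonemptiness automatic at the cost of a worse $\beta$.

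\textbf{Verifying (ii).} Let $h$ meet $\tilde S$. Then neither open side of $h$ can be one of the complements removed in the definition. So each closed side of $h$ contains simplices carrying at least $(\alpha-\delta)n$ points, up to the crossing error. The difficulty is that the defining complements must fully contain simplices, whereas $h$ may cross some; the slack $\delta$ absorbs exactly this. The outcome should be a bound of the form $\beta \ge \alpha-2\delta-O(r^{-1/d})>0$.

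\textbf{Verifying (i), the main obstacle.} Let $h$ miss $\tilde S$. Because $\tilde S$ is an intersection of infinitely many half-spaces rather than a single one, $h$ need not bound any one defining half-space. This is precisely the phenomenon of Figure \ref{fig:splitting_hyperplane_misses_center}, where the naive centerset fails. My first attempt is to translate $h$ towards $\tilde S$ until it supports $\tilde S$, and to use the finiteness of combinatorial types together with Helly's theorem or LP duality. This should exhibit at most $d$ defining half-spaces whose complements cover the far side of $h$ up to crossed simplices. I then need the count of points on that side to stay below $\alpha n$ rather than $d(\alpha-\delta)n$. That is where the simplicial partition must do real work: the simplices on the far side of $h$ but near $\tilde S$ have to be charged to a single defining half-space. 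If this charging fails, I would weaken the threshold in the definition of $\tilde S$ to roughly $\alpha/d$, accepting $\beta=\Theta(\alpha/d)$, which still lies in $(0,\alpha)$ as the lemma requires.
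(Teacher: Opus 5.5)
Your reduction to hyperplanes is correct. The main construction, with threshold $(\alpha-\delta)n$, has a genuine gap that no charging argument can close.

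\textbf{Why the threshold $(\alpha-\delta)n$ fails.} Let $x\in\tilde S$ and let $K\ni x$ be a closed half-space. Slightly enlarged open half-spaces around $K$ contain $x$, so each fully contains simplices carrying at least $(\alpha-\delta)n$ points. For small enlargements these are exactly the simplices inside $K$. Hence $\tilde S\subseteq S_{\alpha-\delta}(P)$, and your set inherits both defects of the ordinary centerset.
\begin{itemize}
\item Nonemptiness fails. Rado's theorem only gives depth $1/(d+1)$, so ``$\alpha-\delta<1/2$'' proves nothing. For three equal clusters at the vertices of a triangle and $\alpha=1/2$, you get $\tilde S=\emptyset$, although halving lines exist.
\item Property (i) fails even when $\tilde S\neq\emptyset$. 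Put $n/5$ points in a tiny cluster at $(-1,1)$, another $n/5$ at $(1,1)$, and $3n/5$ at $(0,-1)$; take $\alpha=1/4$ and $\delta<1/20$. Any point away from $(0,-1)$ can be separated by a line from the lower cluster together with one upper cluster, so its depth is at most $1/5<\alpha-\delta$. Thus $\tilde S$ lies near $(0,-1)$. Yet the line $y=0$ has $2n/5$ and $3n/5$ points on its sides, so it $1/4$-splits $P$.
\end{itemize}
In this example the far side $\{y\ge 0\}$ is covered by two defining complements, each holding one upper cluster. So charging to a single defining half-space is impossible.

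\textbf{The fallback works and should be the construction.} Choose the threshold $\gamma$ with $d\gamma+2(d+1)br^{-1/d}<\alpha$, e.g.\ $\gamma=\alpha/(2d)$ and $r$ large.
\begin{itemize}
\item A Rado centerpoint of $P$ lies in $\tilde S$. An open half-space containing it has at least $n/(d+1)$ points, of which at most $2br^{-1/d}n$ lie in simplices crossed by its boundary. So for large $r$ it fully contains simplices of weight at least $\gamma n$. This is the nonemptiness that Helly needs.
\item $\tilde S$ is a finite intersection of defining half-spaces. A closed half-space meets a simplex iff it contains one of its vertices. So $\tilde S$ is the intersection of the finitely many polytopes spanned by one vertex from each simplex of a family of weight $>(1-\gamma)n$, and their facet half-spaces are defining.
\item Helly, applied to these half-spaces and the far closed side $G$ of $h$, gives $d$ of them whose open complements cover $G$.
\item A simplex inside that union but inside none of the complements is crossed by one of the $d$ boundaries.
\end{itemize}
Hence $|P\cap G|<d\gamma n+2(d+1)br^{-1/d}n<\alpha n$, which is (i), and (ii) holds with $\beta=\gamma$.

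You still owe the case $|P|<r$. In fact, once Helly is in place the partition does no work. Running the same argument with singletons shows that the ordinary centerset $S_{\alpha/d}(P)=\bigcap\{\conv(Q): Q\subseteq P,\ |Q|>(1-\alpha/d)|P|\}$ is already an $(\alpha,\alpha/d)$-core. It is nonempty since $\alpha/d\le 1/(d+1)$, and (ii) is immediate. For (i), Helly yields $Q_1,\dots,Q_m$ with $m\le d$ and $G\cap\conv(Q_1)\cap\dots\cap\conv(Q_m)=\emptyset$. Hence $|P\cap G|\le\sum_i|P\setminus Q_i|<\alpha|P|$.

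\textbf{Comparison with the paper.} The paper instead takes the convex hull of centerpoints of all $\lceil\alpha r/2\rceil$-subsets of a transversal of Matou\v{s}ek's partition. This builds in exactly the one-step charging you were looking for. An $\alpha$-splitting $h$ missing the hull meets at least $\alpha r/2$ parts on its far side. The centerpoint of their representatives lies on the near side, which forces at least $\alpha r/(2(d+1))$ crossed simplices. Property (ii) then follows from the centerpoint guarantee, with $\beta=\alpha/(2d+3)$.

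The Helly route is shorter, needs no partition theorem, and gives the better $\beta=\alpha/d$. It also treats small $|P|$ uniformly. By contrast, the paper's shortcut $\tilde S=\conv(P)$ for $|P|<r$ only guarantees one point on each side of a hyperplane meeting it.
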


\begin{figure}[h]
	\centering
	\includegraphics[scale = 0.8]{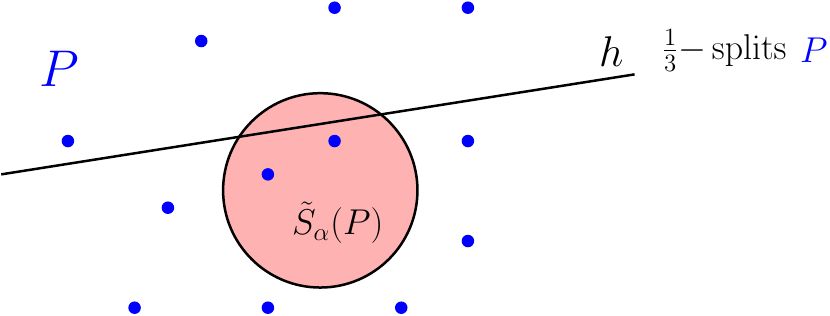}
	\caption{ \fontfamily{cmss} \footnotesize $(\alpha,\beta)$-core of $P$ with $\alpha = 1/3,\beta = 1/4$. Any line that $1/3$-splits $P$ intersects $\tilde{S}_\alpha(P)$ and any line that intersects $\tilde{S}_\alpha(P)$ at least $1/4$-splits $P$.}
	\label{fig:ab_core}
\end{figure}

Theorem \ref{Theorem:SplitHyperplanes} follows almost immediately by applying Theorem \ref{Theorem:AlonKalai} to the family $\{\tilde{S}_{\alpha}(P_i)\mid 1\leq i\leq n\}$ of $(\alpha,\beta)$-cores. We give a detailed proof of Theorem \ref{Theorem:SplitHyperplanes} at the end of this section.
	Therefore, most of this section is devoted to the proof of Lemma \ref{Lemma:Core}.

We will need the following result of Matou\v{s}ek \cite{Matouek1991EfficientPT}. 
\begin{theorem}[Matou\v{s}ek's Simplicial Partition \cite{Matouek1991EfficientPT}]
  \label{Theorem:partition}
  
In any dimension $d\geq 2$, one can fix a constant $b=b_d$ with the following property:

    For a finite point set $X \subset \mathbb{R}^d$ and any integer parameter $r$ that satisfies $2 \leq r < \abs{X}$, there is a partition $X = X_1 \mathbin{\dot\cup} \dotsb \mathbin{\dot\cup} X_t$ into at most $r$ parts such that $\abs{X}/r \leq \abs{X_i} \leq 2\abs{X}/r$ for all $i$, every set $X_i$ is enclosed by a simplex $\Delta_i$ and no hyperplane crosses more than $br^{1-1/d}$ of the simplices $\Delta_i$.
\end{theorem}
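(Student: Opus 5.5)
The statement is Matou\v{s}ek's simplicial partition theorem. I would prove it by the iterative reweighting argument: build the parts one at a time, each inside a cell of a cutting of a weighted finite family of test hyperplanes. Let $n=\abs{X}$ and $s=\lceil n/r\rceil$.

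\textbf{Step 1: a test set.} I first reduce to a finite family $H$ of test hyperplanes with $\abs{H}=r^{O(1)}$. The property I want is this: if every $h\in H$ crosses at most $\kappa$ of the simplices $\Delta_i$, then every hyperplane crosses at most $(d+1)\kappa+O(r^{1-1/d})$ of them.

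To build $H$, pass to the dual, where the points of $X$ become hyperplanes. Take a $(1/t_0)$-cutting of this dual arrangement with $t_0\approx r^{1/d}$; it has $O(r)$ cells. Let $H$ be the set of hyperplanes dual to the vertices of the cells.

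A hyperplane $g$ dual to a point in some cell $\sigma$ can be compared with the hyperplanes dual to the vertices of $\sigma$. If a simplex $\Delta_i$ is crossed by $g$ but by none of these vertex-hyperplanes, then some point of $X$ whose dual hyperplane crosses $\sigma$ must lie inside $\Delta_i$ in a separating way. At most $n/t_0$ points have duals crossing $\sigma$, and each part contains at least $s\ge n/r$ points. So these exceptional simplices number at most $O(r/t_0)=O(r^{1-1/d})$. I expect this lemma, with its careful duality bookkeeping, to be the main technical obstacle.

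\textbf{Step 2: greedy construction with multiplicative weights.} Give every $h\in H$ the initial weight $1$. At stage $i$, let $X'_i$ be the remaining points, with $n_i=\abs{X'_i}\ge 2s$. Let $W_i$ be the total weight.
\begin{enumerate}
\item Take a weighted $(1/t_i)$-cutting of $H$ into $O(t_i^d)$ simplices, each crossed by test hyperplanes of total weight at most $W_i/t_i$. Choose $t_i=c\,(n_i/s)^{1/d}$ with $c$ small, so that the number of simplices is at most $n_i/s$.
\item By pigeonhole, some simplex contains at least $s$ points of $X'_i$. Let $X_i$ be exactly $s$ of those points, and let $\Delta_i$ be that simplex. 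Remove $X_i$ from the remaining points.
\item Double the weight of every $h\in H$ that crosses $\Delta_i$.
\end{enumerate}
When fewer than $2s$ points remain, make them the last part. This gives at most $r$ parts, each of size between $n/r$ and $2n/r$ (adjusting rounding in the last part).

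\textbf{Step 3: the weight analysis.} Each doubling step gives $W_{i+1}\le W_i(1+1/t_i)$. Since $n_i\approx n-(i-1)s$, we have $n_i/s\approx r-i+1$, and hence
\[
\log W_{\mathrm{final}}\le \log\abs{H}+\sum_i \frac{1}{t_i}=O(\log r)+O\Big(\sum_{j=1}^{r} j^{-1/d}\Big)=O(r^{1-1/d}).
\]
A test hyperplane crossing $\kappa$ simplices ends with weight $2^{\kappa}\le W_{\mathrm{final}}$. Therefore $\kappa=O(r^{1-1/d})$ for every $h\in H$.

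Step 1 then transfers this bound to all hyperplanes, giving the constant $b_d$. The delicate points are that the test set depends only on $X$ and $r$, not on the simplices constructed later, and that the $\log\abs{H}$ term is absorbed, which holds because $\abs{H}$ is polynomial in $r$.
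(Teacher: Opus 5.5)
Your plan is Matou\v{s}ek's original argument; the paper only cites the theorem and gives no proof of its own. The plan builds a test set from a cutting of the dual arrangement and then runs a greedy construction with multiplicative weights, and your Steps 2--3 are correct in substance. But the step you flag as the main obstacle is stated in a form that does not give the bound.

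Knowing that \emph{some} point of $X$ whose dual crosses $\sigma$ lies in $\Delta_i$ gives no useful count. The estimate $(n/t_0)/(n/r)=r/t_0$ that you write needs \emph{every} point of $X_i$ to be charged to one of the at most $n/t_0$ dual hyperplanes meeting the interior of $\sigma$; disjointness of the parts then finishes.

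This stronger claim holds, for the following reason.
\begin{enumerate}
\item A full-dimensional $\Delta_i$ crossed by none of $v_0^*,\dots,v_d^*$ lies in one open cell of their arrangement.
\item Use the fact that $p$ lies above $q^*$ iff $q$ lies above $p^*$. If $p$ lies strictly above every $v_j^*$, then all of $\sigma$ lies strictly on one side of $p^*$. Hence $p$ lies strictly on one side of $q^*$ for every $q\in\sigma$, in particular of $g$. The same holds for points strictly below every $v_j^*$.
\item Since $g$ meets $\Delta_i$, the cell containing $\Delta_i$ is a ``middle'' one. So each $p\in\Delta_i$ lies above some $v_j^*$ and below some $v_k^*$.
\item Then $p^*$ strictly separates $v_j$ from $v_k$, so it meets the interior of $\sigma$.
\end{enumerate}
Adding the at most $(d+1)\kappa$ simplices crossed by some $v_j^*$ gives the lemma.

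Two further points need attention.

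\textbf{The weight update.} With the statement's notion of crossing ($h\cap\Delta\neq\emptyset$, $\Delta\not\subset h$), the bound $W_{i+1}\le W_i(1+1/t_i)$ is unjustified if $\Delta_i$ is the closed cutting simplex.
\begin{itemize}
\item The cutting only controls the weight of hyperplanes meeting the simplex's interior. Hyperplanes of $H$ through its vertices or along its facets also touch it, and these are typically the heavy, sampled ones.
\item You cannot shrink $\Delta_i$ in general. Points of $X$ lie on test hyperplanes, which in the usual constructions are spanned by $d$ points of $X$, so $X_i$ may sit on the boundary.
\end{itemize}
Matou\v{s}ek's remedy is to pigeonhole over the $O(t_i^d)$ relatively open faces of the cutting simplices and take $\Delta_i$ to be the face containing $X_i$. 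A hyperplane crossing a relatively open face meets the interior of the cutting simplex, so only weight the cutting accounts for gets doubled. This is why lower-dimensional $\Delta_i$ appear in the statement.

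\textbf{The size bound on the last part.} Your last part can have $2s-1>2n/r$ points, and no rounding adjustment can fix this. For $n=5$, $r=4$ every part would need exactly two points. More generally, whenever $r<n<3r/2$ with $n$ odd, the printed size condition is unsatisfiable. What your construction actually proves is Matou\v{s}ek's form $s\le\abs{X_i}\le 2s-1$ with $s=\lceil n/r\rceil$. That is harmless for the paper's application up to constants.
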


\begin{rem}
  In line with standard terminology, we say that a hyperplane $h$ \textbi{crosses} a simplex $\Delta$ if we have that $h\cap \Delta\neq \emptyset$ yet $\Delta\not\subset h$. If the set $P$ is not in general position (i.e., more than $k+1$ among its points lie in the same $k$-flat), it is possible that the dimension of some simplices in Theorem \ref{Theorem:partition} is smaller than $d$. 
\end{rem} 

\begin{proof}[Proof of Lemma \ref{Lemma:Core}.]
We prove the lemma with \[\beta(\alpha,d):= \frac{
\alpha}{2d+3}.\]

To construct the desired set $\tilde{S}_{\alpha}(P)$, let us fix an integer parameter 

\begin{equation}\label{Eq:ChooseR}
r:=\left\lceil \br{\frac{b_d(2d+2)(2d+3)}{\alpha}}^d\right\rceil.
\end{equation}

\noindent We may assume $d\geq 2$. Otherwise, let $p_1 \leq p_2 \leq \dots \leq p_{|P|}$ be the points of $P$, and let $x$ and $y$ be the points of $P$ such that there are exactly $\lceil \alpha|P| \rceil$ points of $P$ smaller than $x$ and exactly $\lceil \alpha|P| \rceil$ bigger than $y$. Setting $\tilde S_\alpha(P) = \conv{(\{x,y\})}$ concludes the proof in this case.\\

 \noindent We may also assume $|P| \geq r$, else we may take $\tilde S_\alpha(P)= \conv(P)$ as an $(\alpha,\beta)$-core. \\

\noindent Now, consider the simplicial partition $P = P_1 \dot\cup \cdots \dot\cup P_t$, for $t\leq r$, as described in Theorem \ref{Theorem:partition}. 
	
Let $T$ be an arbitrary \textit{transversal} to the set family $\{P_j\mid 1\leq j\leq t\}$, that is, a subset $T \subseteq P$ that satisfies $|T\cap P_j|=1$ for all $1\leq j\leq t$. To each subset of $S\in {T\choose s}$ of cardinality $s:=\ceil{\alpha r/2}$ we assign a single centerpoint $\varphi=\varphi(S)$ in accordance with Theorem \ref{Theorem:Centerpoint}.
Then our (tentative) $(\alpha,\beta)$-core $\tilde{S}_\alpha(P)$ is defined as the convex hull of all such centerpoints:
      \[\tilde{S}_\alpha(P)=\conv\left( \left\{\varphi(S): S\in {T\choose s}\right\} \right).\]

\begin{figure}[h]
	\centering
		\includegraphics[scale = 0.64]{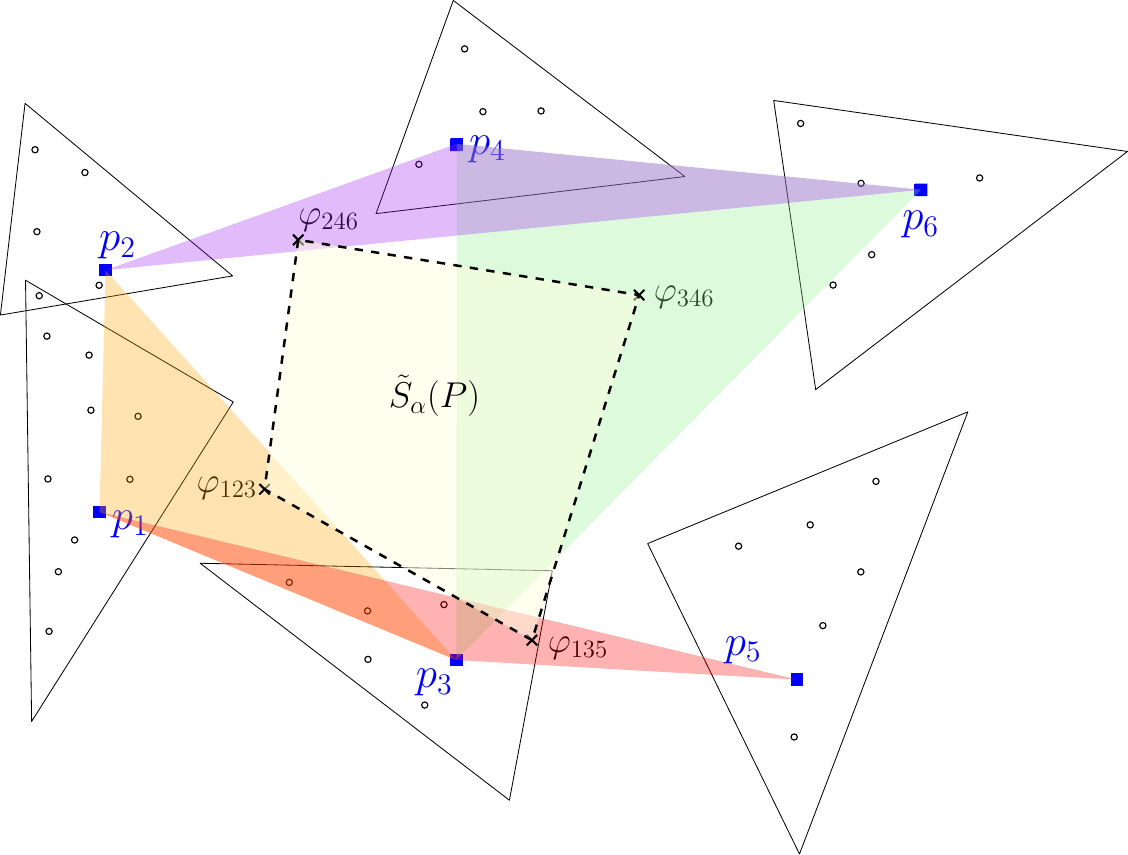}
		\caption{\fontfamily{cmss}\selectfont \footnotesize 
		Construction of an $(\alpha,\beta)-$core. For $t=6,r=12,\alpha = 1/3 \implies s=3$. The transversal $T$ is the set of blue points. For every $s$ points of $T$ we take a centerpoint (which exists by Rado's centerpoint theorem). Finally we take $\tilde{S}_\alpha(P)$ to be the convex hull of all the centerpoints $\varphi$.\\
		Note: For clarity sake, this illustration does not include \textbf{all} centerpoints of $s$-subsets of $T$.
		}
	\label{fig:core_construction}
\end{figure}

Observe that by Rado's centerpoint theorem, $\tilde{S}_\alpha(P)$ is not empty, and it is a compact convex set as the convex hull of a finite number of points.\\
To see that $\tilde{S}_\alpha(P)$ is crossed by any $k$-flat $\tau$ that $\alpha$-splits $P$, it is necessary and sufficient to check that it is crossed by any hyperplane $h$ through $\tau$. Assume for a contradiction that $h\cap \tilde{S}_\alpha(P)=\emptyset$. In other words, all the centerpoints $\varphi(S)$, for $S\in {T\choose s}$, lie in the same open half-space of $\reals^d\setminus h$, which we denote by $h^-$. On the other hand, since each closed half-space contains at least $\alpha |P|$ points of $P$, it follows that $\tilde{S}_\alpha(P)$ is strictly separated by $h$ from at least $\alpha|P|$ points which belong to the set $P^+=P\cap \overline{h^+}$ (where $\overline{h^+}$ denotes the closure of $h^+$). See Figure \ref{fig:many_hyperplane_crossings}.

Since each of the sets $P_j$ contains at most $2\abs{P}/r$ points, at least $s$ of them must intersect $P^+$. Let $p_1,\dots,p_{s}$ denote their respective transversal points within $T$, and fix $S:=\{p_1,\dots,p_s\}$. 
Since the centerpoint $q(S)$ lies in $h^-$, so do at least $s/ (d+1)$ of its elements $p_j$. For each of these latter indices $j$, the enclosing simplex $\Delta_j$ of $P_j$ contains a point from $h^-$ as well as a point from $\overline{h^+}$. Therefore, $h$ intersects at least $\ceil*{\alpha r/2} / (d+1)$ simplices in the partition. Together with the choice of $r$ in (\ref{Eq:ChooseR}), which guarantees that 

  \[
    br^{1-1/d}\leq  \frac{\alpha r}{2(d+1)},
    \]
    
\noindent this leads to a contradiction to the fact that no hyperplane crosses more than $br^{1-1/d}$ of the enclosing simplices $\Delta_j$.

\begin{figure}[h]
  \centering
  \includegraphics[scale = 0.65]{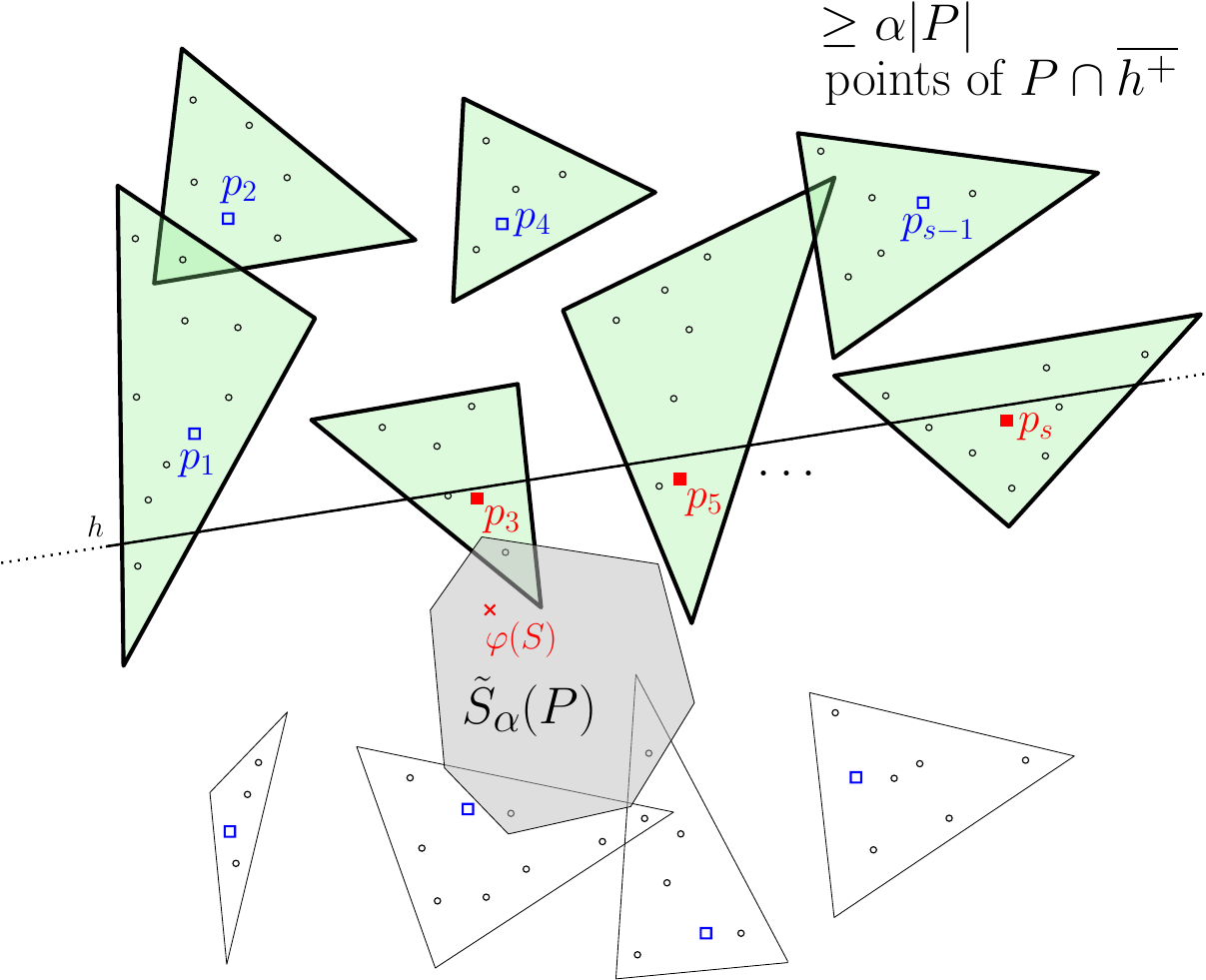}
 \caption{\fontfamily{cmss}\selectfont \footnotesize    
    Proof of Lemma \ref{Lemma:Core}. The empty circles are points of $P$, the boxes are the points of the transversal $T$, and the filled boxes are points among $\{p_1,\dotsc, p_s\}$ that are in $h^-$.
    If $h$ separates $\tilde{S}_\alpha(P)$ from at least $\alpha |P|$ points of $P$ (empty circles) then it must also separates the points of $S=\{p_1,\dotsc, p_s\}$ (red and blue boxes) from a centerpoint $\varphi(S)$ of $S$, which lies in $\tilde S_\alpha(P) \subset h^-$. Therefore, $h$ has to meet at least $s/(d+1) \geq \alpha r/2(d+1)$  of the enclosing simplices $\Delta_j$ whose respective points $p_j$ lie in $h^-$(simplices whose transversal points are red).}
  
  \label{fig:many_hyperplane_crossings}
\end{figure}

\newpage 
It remains to show that the set $P$ is $\beta$-split by any $k$-flat $\tau$ that intersects $\tilde{S}_\alpha(P)$. 
Indeed, fix any hyperplane $h$ through $\tau$, which must too cross $\tilde{S}_\alpha(P)$. In other words, each of the closed halfspaces $\overline{h^-}$ and $\overline{h^+}$ must contain at least one centerpoint $\varphi(S)$, for some $S\in {T\choose s}$, so both of them must contain at least $s/(d+1) = {\lceil{\alpha r/2}\rceil / (d+1)}\geq \frac{\alpha r}{2(d+1)}$ points of the transversal $T$, each belonging to a different part $P_j$. Since $h$ is an arbitrarly oriented hyperplane through $\tau$, it is sufficient to show that the closed halfspace $\overline{h^-}$ contains at least $\beta |P|$ points of $P$.

To recapitulate, the previous argument yields at least $\left\lceil{\frac{\alpha r}{2(d+1)}}\right\rceil$ simplices that intersect $\overline{h^-}$, of which at most $br^{1-1/d}$ can cross $h$, which leaves at least $\ceil{\frac{\alpha r}{2(d+1)}}-br^{1-1/d}$ simplices that are fully contained in $\overline{h^-}$. 
It, therefore, follows that 
\[
\left|P\cap \overline{h^-}\right| \geq  \left( \frac{\alpha r}{2(d+1)}-br^{1-1/d} \right)\frac{\abs{P}}{r} \geq \left( \frac{\alpha}{2(d+1)}-br^{-1/d} \right)\abs{P}.	
\]

\noindent In view of our choice (\ref{Eq:ChooseR}) of $r$, which guarantees that
\[
  \frac{b}{r^{1/d}}< \frac{\alpha}{2(d+1)}-\frac{\alpha}{2(d+1)+1},
  \] 

\noindent it follows that
\[
\left|P\cap \overline{h^-}\right|\geq \left(\frac{\alpha}{2(d+1)}-\br{ \frac{\alpha}{2(d+1)}-\frac{\alpha}{2(d+1)+1} }\right)|P| = \frac{\alpha}{2d+3}|P|.
\] 

\end{proof}

\begin{proof}[Proof of Theorem \ref{Theorem:SplitHyperplanes}.]

Let $\P=\set{P_1, \dotsc, P_n}$ be the point sets as in the statement of the theorem, and $\alpha$ a fraction in $(0,\frac{1}{2}]$. For each $1\leq i\leq n$, we construct the $(\alpha,\beta)$-core $\tilde{S_i}=\tilde{S}_\alpha(P_i)$ as described in Lemma \ref{Lemma:Core}, with $\beta(\alpha,d):=\frac{\alpha}{2d+3}$. Let $\S= \set{\tilde{S_1}, \dots, \tilde{S_n}}$ denote the resulting family of $n$ compact convex sets. For any subset $I\subseteq [n]=\{1,\dots,n\}$, we will use $\P_I$ and $\S_I$ to denote the respective subsets $\{P_i\mid i\in I\}$ and $\{\tilde{S}_i\mid i\in I\}$ of $\P$ and $\S$.

By the $(p,q,\alpha)$-property, any size-$p$ subset $I\in {[n]\choose p}$ must contain a size-$q$ subset $I'$ so that the family $\P_{I'}$ is $\alpha$-split by a hyperplane $h$. Clearly, that same hyperplane $h$ must also cross the respective set $\S_{I'}$ of $(\alpha,\beta)$-cores.
Now Theorem \ref{Theorem:AlonKalai} yields a set $Q$ of $C = C(p,q,d) < \infty$ hyperplanes with the property that every $(\alpha,\beta)$-core $\tilde{S}_i$ in $\S$ is intersected by some hyperplane $h\in Q$, which in turn guarantees that its respective point set $P_i\in \P$ is $\beta$-split by $h$.
\end{proof}

\section{Proof of Theorem \ref{Theorem:SplitFlats}}\label{Sec:Flats}

We identify every hyperplane in $\reals^d$ that does not pass through the origin, with a unique dual point $h^\ast=(a_1,\ldots,a_d)\in \reals^d\setminus \{0\}$, so that $h=\left\{(x_1,\ldots,x_d)\in \reals^d \mid a_1x_1+ \dotsb +a_dx_d=1\right\}$ \cite[Ch. 5.1]{matousek2013lectures}. 
At the center of our proof lies the following observation.

\begin{lemma}\label{Lemma:SemiHyperplanes}
For any positive integer $d$, and any positive numbers $\alpha\in (0,1/2]$ and $\epsilon\in (0,\alpha)$, one can fix a positive integer $c=c(\epsilon,d)<\infty$ with the following property.

For any finite point set $P$ in $\reals^d$, there exists a semi-algebraic set $\Gamma_{\alpha,\epsilon}(P)\subseteq \reals^d$, whose description complexity is bounded by $c$, so that the following statements hold.

\begin{enumerate}
	\item Any $\alpha$-splitting hyperplane $h$ for $P$ that does not pass through the origin yields a point $h^\ast\in \Gamma_{\alpha,\epsilon}(P)$.
	\item Any point $(a_1,\ldots,a_d)\in \Gamma_{\alpha,\epsilon}(P)$ describes a hyperplane that $(\alpha- \epsilon)$-splits $P$ (and does not pass through the origin).
\end{enumerate}
\end{lemma}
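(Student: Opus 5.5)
The plan is to replace $P$ by a small $\epsilon$-approximation and define $\Gamma_{\alpha,\epsilon}(P)$ by a threshold condition on that sample. Its size, and hence the description complexity of the region, then depends only on $\epsilon$ and $d$. Consider the range space $(\reals^d,\mathcal{H})$ of closed half-spaces, whose VC-dimension is $d+1$. By Theorem \ref{Theorem:Eps_apprx}, applied with accuracy $\epsilon/2$ to $A=P$, we obtain a subset $N\subseteq P$ (taken as a multiset if needed, or by padding) of cardinality $m\le c'\frac{d+1}{\epsilon^2}\ln\frac{d+1}{\epsilon}$. For every closed half-space $F$ it satisfies
\[
\left|\frac{|F\cap P|}{|P|}-\frac{|F\cap N|}{|N|}\right|<\epsilon/2 .
\]
We may fix $m$ to be exactly a number $m(\epsilon,d)$ by allowing repetitions, so all the bounds below depend only on $(\epsilon,d)$.

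Next, write $N=\{x_1,\dots,x_m\}$ and define $\Gamma_{\alpha,\epsilon}(P)$ as the set of all $a\in\reals^d\setminus\{0\}$ such that
\[
\#\{j:\langle a,x_j\rangle\ge 1\}\ge (\alpha-\epsilon/2)m \quad\text{and}\quad \#\{j:\langle a,x_j\rangle\le 1\}\ge (\alpha-\epsilon/2)m .
\]
Each condition is a Boolean combination (an OR over subsets of $[m]$ of ANDs) of the $2m$ linear inequalities $\langle a,x_j\rangle-1\ge 0$ and $1-\langle a,x_j\rangle\ge 0$. The condition $a\ne 0$ is one more polynomial inequality, for example $\sum a_i^2>0$. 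So the defining polynomials have degree at most $2$ and their number is $O(m)$. The Boolean formula $\Phi$ has size bounded in terms of $m$ alone, and its shape depends on $P$ only through the coefficients $x_j$. Hence the description complexity is bounded by $c=c(\epsilon,d)$. The value of $\alpha$ enters only through a threshold, which we can round; alternatively, only finitely many count thresholds $\lceil(\alpha-\epsilon/2)m\rceil\in\{0,\dots,m\}$ can occur, so the bound is uniform in $\alpha$.

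Both properties now follow from the approximation inequality applied to the two closed half-spaces bounded by $h$. For property 1, if $h$ $\alpha$-splits $P$ and misses the origin, then each closed side has $P$-fraction at least $\alpha$. Hence it has $N$-fraction greater than $\alpha-\epsilon/2$, so $h^\ast\in\Gamma$. For property 2, a point $a\in\Gamma$ is nonzero and therefore defines a hyperplane $h$ not through the origin. Each closed side of $h$ has $N$-fraction at least $\alpha-\epsilon/2$, hence $P$-fraction greater than $\alpha-\epsilon$, so $h$ $(\alpha-\epsilon)$-splits $P$.

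The main obstacle is making the description complexity genuinely independent of $|P|$. This is exactly why we sample $N$ rather than counting over all of $P$, and it forces care with the multiset padding so that $m$ is fixed. A secondary subtlety is that the $\epsilon$-approximation must control \emph{closed} half-spaces on both sides of $h$. This holds because both closed sides are ranges in $\mathcal{H}$, so the single guarantee of Theorem \ref{Theorem:Eps_apprx} covers them.
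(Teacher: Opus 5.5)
Your proposal is correct and follows the paper's proof. The paper likewise takes an $\epsilon/2$-approximation $A\subseteq P$ for half-spaces, defines $\Gamma_{\alpha,\epsilon}(P)$ as the duals of hyperplanes that $(\alpha-\epsilon/2)$-split $A$, and certifies bounded complexity as a union of arrangement cells $X(P^-,P_0,P^+)$, where you write an explicit Boolean formula over the $2m+1$ inequalities. The only thing to drop is the padding to a fixed $m$. It is unnecessary, since $m\le m(\epsilon,d)$ already bounds the number of polynomials, and arbitrary padding would in general break the $\epsilon/2$-approximation guarantee unless every point is replicated the same number of times.
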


The proof of Lemma \ref{Lemma:SemiHyperplanes} will rely on the following straightforward corollary of Theorem \ref{Theorem:Eps_apprx}.

\begin{lemma}
\label{Lemma:ApprxSplit}
  Let $P\subset \reals^d$ be an $m$-point set, $1\leq k \leq d-1$, and let $\epsilon >0$. There exists a set $A\subseteq P$ of size at most $O\br{\frac{d}{\epsilon^2}\ln(\frac{d}{\epsilon})}$ with the following property:
  \begin{enumerate}[(i)]
    \item if $\tau$ is a $k$-flat that $\alpha$-splits $P$ then it $(\alpha-\epsilon)$-splits $A$;
    \item if $\tau$ is a $k$-flat that $\alpha$-splits $A$ then it $(\alpha-\epsilon)$-splits $P$.
  \end{enumerate}
  
\end{lemma}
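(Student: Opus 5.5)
We apply Theorem \ref{Theorem:Eps_apprx} to the range space $\R=(\reals^d,\F)$ whose ranges are all closed half-spaces of $\reals^d$. This range space has VC-dimension $d+1$, by Radon's theorem. Taking $A:=P$ and accuracy $\epsilon$, we obtain a set $A'$ of cardinality at most $c\cdot\frac{d+1}{\epsilon^2}\ln\frac{d+1}{\epsilon}=O\br{\frac{d}{\epsilon^2}\ln\frac{d}{\epsilon}}$ such that
\[
\left|\frac{|H\cap P|}{|P|}-\frac{|H\cap A'|}{|A'|}\right|<\epsilon
\]
for every closed half-space $H$.

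Theorem \ref{Theorem:Eps_apprx} as stated only promises $A'\subseteq X=\reals^d$, whereas the lemma asks for $A\subseteq P$. We therefore use the standard form of the construction: a random sample of $P$ drawn \emph{with} repetition. This sample is a sub-multiset of $P$, and the approximation guarantee still holds for it, counting each point with multiplicity. If a genuine subset is required, we invoke the deterministic halving construction of $\epsilon$-approximations, which always produces subsets of $P$. We then set $A:=A'$, possibly as a multiset. This is harmless, because the splitting notions only involve the counts $|H\cap A|$. We regard this bookkeeping as the only mild obstacle; everything else is formal.

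The rest is a direct unwinding of the definitions. By definition, a $k$-flat $\tau$ $\gamma$-splits a set $Y$ exactly when every closed half-space $H\supseteq\tau$ satisfies $|H\cap Y|\geq\gamma|Y|$. Indeed, each hyperplane $h\supseteq\tau$ contributes its two closed half-spaces, and every closed half-space containing $\tau$ has a bounding hyperplane that either contains $\tau$ or is parallel to it. The parallel case is handled by translating that hyperplane until it reaches $\tau$. This only enlarges the half-space, so the condition is monotone in the right direction. To keep the argument clean, we will phrase everything via the depth formulation $\text{depth}_Y(\tau)\geq\gamma|Y|$.

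For (i), suppose $\tau$ $\alpha$-splits $P$, and let $H\supseteq\tau$ be any closed half-space. Then $|H\cap P|/|P|\geq\alpha$, so the approximation property gives $|H\cap A|/|A|>\alpha-\epsilon$. Hence $\tau$ $(\alpha-\epsilon)$-splits $A$. Part (ii) is the symmetric computation with the roles of $P$ and $A$ exchanged: $|H\cap A|/|A|\geq\alpha$ implies $|H\cap P|/|P|>\alpha-\epsilon$. The statement holds uniformly for all $1\leq k\leq d-1$ (and indeed for $k=0$), since the argument never uses $k$.
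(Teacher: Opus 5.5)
Your proof is correct and follows essentially the same route as the paper: apply the $\epsilon$-approximation theorem to the half-space range space (VC-dimension $d+1$ by Radon's lemma), then unwind the definition of splitting one half-space at a time. Two small differences from the paper: the paper avoids your multiset bookkeeping by taking the ground set of the range space to be $P$ itself, so the approximation is automatically a subset of $P$; and your use of closed half-spaces (the paper works with open ones) matches the paper's closed-half-space definition of splitting more faithfully.
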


\begin{proof}[Proof of Lemma \ref{Lemma:ApprxSplit}.]

Let us consider the hypergraph $(P,[\mathcal{H}_d]_P)$, where $\mathcal{H}_d$ denotes the family of all the open halfspaces in $\reals^d$, and $[\mathcal{H}_d]_P$ denotes the set family $\{P\cap h \mid h \in \mathcal{H}_d\}$. It is a well-known fact that VC-dim($P,\mathcal{H}_d$) = $d+1$, which is a direct consequence of Radon's lemma (see, e.g., \cite[Ch. 1.3]{matousek2013lectures} for a comprehensive exposition). Theorem \ref{Theorem:Eps_apprx} yields an $\epsilon$-approximation $A\subseteq P$ whose cardinality is at most $O\br{\frac{d+1}{\epsilon^2}\ln(\frac{d+1}{\epsilon})}=O\br{\frac{d}{\epsilon^2}\ln(\frac{d}{\epsilon})}$. 

To see part {\it (i)}, let $\tau$ be an $\alpha$-splitting $k$-flat for $P$, and $h$ be an arbitrary hyperplane through $\tau$. Let $h^+$ and $h^-$ denote the open halfspaces within $\reals^d\setminus h$, with labels assigned in an arbitrary manner. By the choice of $h$, we have that
                                \[\min\left\{\frac{\abs{P\cap h^+}}{\abs{P}}, \frac{\abs{P \cap h^-}}{\abs{P}}\right\} \geq \alpha.\]
                                
\noindent Since $A$ is an $\epsilon$-approximation, it follows that
                          \[\abs*{\frac{\abs{A\cap h^+}}{\abs{A}}-\frac{\abs{P\cap h^+}}{\abs{P}}} \leq \epsilon,\]
                          
\noindent which in turn means

        \[\frac{\abs{A\cap h^+}}{\abs{A}} \geq \frac{\abs{P\cap h^+}}{\abs{P}} - \epsilon \geq \alpha - \epsilon.\]

Since $h$ is an arbitrary hyperplane through $\tau$, and $h^+$ denotes an arbitrary halfspace of $\reals^d\setminus h$, it follows that $A$ is $(\alpha-\epsilon)$-split by $\tau$. The proof of part {\it (ii)} is symmetrical.
\end{proof}

\begin{proof}[Proof of Lemma \ref{Lemma:SemiHyperplanes}.]
We invoke Lemma \ref{Lemma:ApprxSplit} with parameter $\epsilon/2$. It can be assumed, with no loss of generality, that no point of $P$ coincides with the origin. 
For each hyperplane $h$ that does not pass through the origin $\vec{0}$, we use $h^-$ to denote the open half-space $\reals^d\setminus h$ that contains $\vec{0}$, and we use $h^+$ to denote the opposite open halfspace.

Our set $\Gamma_{\alpha,\epsilon}(P)$ is comprised of all the dual representations $h^\ast$ of the hyperplanes $h$ that $(\alpha-\epsilon/2)$-split the $\epsilon/2-$approximation $A$.
In view of the properties asserted in Lemma \ref{Lemma:ApprxSplit}, and of the fact that $|A|=O\left(\frac{d}{\epsilon^2}\ln \frac{d}{\epsilon}\right)$, it suffices to show that $\Gamma_{\alpha,\epsilon}(P)$ is a semi-algebraic set in $\reals^d$. 

To see the latter statement, note that $\Gamma_{\alpha,\epsilon}(P)$ is a union of finitely many sets of the form $X(P^-,P_0,P^+)$, where each set corresponds to a partition $P=P^- \,\dot{\cup}\,P_0 \,\dot{\cup}\, P^+$, and  ``represents'' all the hyperplanes $h$, excluding those that pass through the origin, that yield $P^-=P\cap h^-$, $P_0=P\cap h$, and $P^+=P\cap h^+$.
As is described, e.g., in a standard textbook \cite{bochnak2013real}, each of the latter sets must be semi-algebraic of the form $\Delta$ or $\Delta\setminus \{\vec{0}\}$, where $\Delta$ is a convex polyhedron, i.e., an intersection of a finite number of hyperplanes and/or open halfspaces. (As a matter of fact, the number of {\it non-empty} sets $X(P^-,P_0,P^+)$ is $O(|A|^d)$; furthermore, we are only interested in such partitions $(P^-,P_0,P^+)$ that attain $\max\{|P^+|,|P^-|\}\leq (1- \alpha+\epsilon/2)|P|$.)
\end{proof}

\noindent {\bf From hyperplanes to $\bm{k}$-flats.} 
To establish an analogue of Lemma \ref{Lemma:SemiHyperplanes} for $\alpha$-splitting $k$-flats, with $1\leq k\leq d-1$, we seek a comparably efficient dual representation of (almost all) $k$-flats by points in $\reals^{(k+1)(d-k)}$. Notice that while a generic $(k+1)$-tuple $B=\{b_1,\ldots,b_{k+1}\}$ of points in $\reals^d$ gives rise to a unique $k$-flat 

\begin{equation}
\tau(b_1, \dots, b_{k+1}) := \left\{ \sum_{i=1}^{k+1} t_i b_i \;\middle|\; 
t_1, \dots, t_{k+1} \in \mathbb{R},\; \sum_{i=1}^{k+1} t_i = 1 \right\}
\end{equation}

\begin{center}
	\text{\fontfamily{cmss} \footnotesize ($\tau$ is $k$-dimensional subspace which is the affine span of $b_1, \dots, b_{k+1}$)}
\end{center} 

\noindent which is the common intersection of all the hyperplanes through $b_1,\ldots,b_{k+1}$ \cite{Shafarevich2013}, this same flat $\tau$ is determined by {\it almost any} other choice of $k+1$ points $b_1,\ldots,b_{k+1}\in \tau$. 

To obtain a more ``economical'' representation, note first that $k$-flats in $\reals^d$ form a so-called Grassmanian manifold of dimension $(k+1)(d-k)$ in the projective space ${\mathbb P}^{d+1\choose k+1}$ \cite{Shafarevich2013}.
For a simpler (albeit incomplete) parametrization, let us fix a ``generic" $(k+1)$-tuple of $(d-k)$-flats $\lambda_1,\dots,\lambda_{k+1}$. To this end, it suffices to fix $k+1$ (ordered) $(d-k+1)$-point sets $B_1,\dots,B_{k+1}\subset \reals^d$, each chosen randomly and uniformly from $[0,1]^{d\times (d-k+1)}$, and set $\lambda_i=\tau(B_i)$ for all $1\leq i\leq k+1$.

To establish Theorem \ref{Theorem:SplitFlats}, it is necessary and sufficient to consider $q=(k+1)(d-k)+1$, and show that any family $\P=\{P_1,\dots,P_n\}$ of $n\geq p\geq q$ finite point sets in $\reals^d$ with the $(p,q)$-property can be $(\alpha-\epsilon)$-split by a fixed number of $k$-flats, whose number depends only on $d,k,p$ and $\epsilon$.
Notice that any given $k$-flat $\tau$ in $\reals^d$ admits, with probability $1$, a unique representation $\tau=\tau(b_1,\dots,b_{k+1})$, for some $(b_1,\ldots,b_{k+1})\in \lambda_1\times\dots\times \lambda_{k+1}\simeq \reals^{(k+1)(d-k)}$. (This is because almost all pairs of flats, with prescribed dimensions $k$ and $d-k$, determine a unique intersection point in $\reals^d$ \cite[Ch. 1.1]{matousek2013lectures};  see Figure \ref{fig:k_flats_representation}).

\begin{figure}[h]
  \centering
  \includegraphics[scale = 0.8]{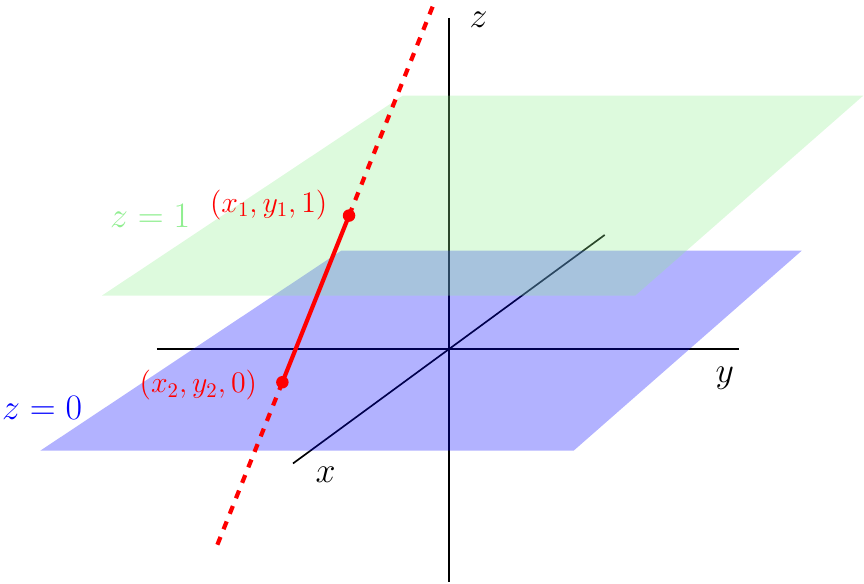}
 \caption{\fontfamily{cmss}\selectfont \footnotesize Illustration of the representation of $k$-flats in $\reals^3$ with $k=1$.
 We fix two 2-flats $z=0$ and $z=1$. Every point in each flat is represented by just two parameters and a generic $1$-flat is the affine span of two such points with probability 1. Thus, a generic $1$-flat is represented by 4 parameters ($=(d-k)(k+1)$).}
  \label{fig:k_flats_representation}
\end{figure}

It can, therefore, be assumed that every size-$q$ subset of $\P$, that is $\alpha$-split by at least one $k$-flat $\tau$, is $\alpha$-split but at least one $k$-flat of the latter kind. Lastly, a generic translation of the axes frame guarantees that none of these latter $k$-flats passes through the origin $\vec{0}$.

For each $1\leq i\leq n$ we consider the following semi-algebraic set in $\lambda_1\times\dots\times\lambda_{k+1}\simeq \reals^{(k+1)(d-k)}$

\begin{equation}\label{Eq:Lambda}
 \Lambda_{\alpha,\epsilon,k}(P_i):=
\end{equation}
 \[
 \left\{\overline{b}\in \oplus_{i=1}^{k+1}\lambda_i \mid \left[\vec{0}\not\in \tau(\overline{b})\right] \wedge\left[\forall \overline{a}\in \reals^d\setminus \{\vec{0}\}: \{b_1,\ldots,b_{k+1}\}\in (\overline{a})^\ast \Rightarrow (a_1,\ldots,a_d) \in \Gamma_{\alpha,\epsilon}(P_i)\right]\right\},
\]

\noindent where $\overline{b}$ denotes a $(k+1)$-tuple $(b_1,\ldots,b_{k+1})$ of points $b_i\in \lambda_i$, and $(\overline{a})^\ast$ denotes the hyperplane $\{(x_1,\ldots,x_d)\in \reals^d\mid a_1x_1+\ldots+a_dx_d=1\}$. Namely,  $\Lambda_{\alpha,\epsilon,k}(P_i)$ is comprised of all the ordered $(k+1)$-tuples $(b_1,\ldots,b_{k+1})\in \lambda_1\times\ldots\times \lambda_{k+1}$ with the following properties: (i) $\tau(b_1,\ldots,b_{k+1})$ does not contain $\vec{0}$, and (ii) any hyperplane through them (and, thus, through the entire flat $\tau(b_1,\ldots,b_{k+1})$), that misses the origin, belongs to the semi-algebraic set $\Gamma_{\alpha,\epsilon}(P_i)$ described in Lemma \ref{Lemma:SemiHyperplanes}.

\begin{lemma}
With the previous choice of $p\geq q=(k+1)(d-k)+1$, $\alpha$, $\epsilon$, $\P=\{P_1,\ldots,P_n\}$, and of the $(d-k)$-flats $\lambda_1,\ldots,\lambda_{k+1}$, the previously defined set family $\left\{\Lambda_{\alpha,\epsilon}(P_1),\ldots,\Lambda_{\alpha,\epsilon}(P_n)\right\}$ has the $(p,q)$-property. 
\end{lemma}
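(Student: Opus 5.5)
Fix an arbitrary index set $I\in{[n]\choose p}$. The plan is to take a $q$-subset $I'$ and a flat $\tau$ from the $(p,q,\alpha)$-property, and then show that the parameter tuple $\overline b$ of $\tau$ lies in $\Lambda_{\alpha,\epsilon,k}(P_i)$ for every $i\in I'$. That makes $\bigcap_{i\in I'}\Lambda_{\alpha,\epsilon,k}(P_i)\neq\emptyset$, which is exactly the $(p,q)$-property for the family $\{\Lambda_{\alpha,\epsilon,k}(P_i)\}$.

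Concretely, the $(p,q,\alpha)$-property of $\P$ yields $I'\in{I\choose q}$ and a $k$-flat $\tau$ that simultaneously $\alpha$-splits every $P_i$ with $i\in I'$. By the genericity reductions made just before the lemma, we may replace $\tau$ by a flat of the ``generic'' kind, and we may assume $\vec 0\notin\tau$. The first genericity assumption is the random choice of $\lambda_1,\dots,\lambda_{k+1}$: with probability one, each of the finitely many relevant $q$-subsets that is $\alpha$-split at all is $\alpha$-split by some flat meeting each $\lambda_j$ in a single point. The second is the generic translation of the coordinate frame.

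With these assumptions, set $b_j:=\tau\cap\lambda_j$ for $1\le j\le k+1$, so $\overline b=(b_1,\ldots,b_{k+1})\in\lambda_1\times\dots\times\lambda_{k+1}$ and $\tau(\overline b)=\tau$. Genericity also gives that $b_1,\dots,b_{k+1}$ are affinely independent. Condition (i) in the definition (\ref{Eq:Lambda}), namely $\vec 0\notin\tau(\overline b)$, then holds. For condition (ii), take any $\overline a\neq\vec 0$ whose dual hyperplane $(\overline a)^\ast$ contains $b_1,\dots,b_{k+1}$. This hyperplane contains their affine span $\tau$, and since $\tau$ $\alpha$-splits $P_i$, the hyperplane $(\overline a)^\ast$ is an $\alpha$-splitting hyperplane of $P_i$. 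It avoids the origin by construction of the duality. Part 1 of Lemma \ref{Lemma:SemiHyperplanes} then gives $\overline a\in\Gamma_{\alpha,\epsilon}(P_i)$. Hence $\overline b\in\Lambda_{\alpha,\epsilon,k}(P_i)$ for all $i\in I'$, as required.

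The only delicate step is the genericity reduction. I need that the flat $\tau$ can be chosen so that it meets every $\lambda_j$ in exactly one point, the resulting points span $\tau$, and $\tau$ misses the origin. The paper asserts this before the lemma, and I would invoke it directly. If more care were needed, I would argue as follows. There are finitely many candidate $q$-subsets and finitely many witnessing flats, one per subset. For a fixed $k$-flat, the set of choices of $(B_1,\dots,B_{k+1})$ for which $\tau\cap\lambda_j$ fails to be a single point, or for which the intersection points are affinely dependent, is a proper algebraic subset and hence has measure zero. A union bound over the finitely many flats then lets a single random choice of the $\lambda_j$ work for all of them simultaneously.
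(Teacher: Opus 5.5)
Your proof is correct and follows the same argument as the paper. You take the $q$-subset and a common $\alpha$-splitting $k$-flat $\tau$ from the $(p,q,\alpha)$-property, use the genericity of $\lambda_1,\ldots,\lambda_{k+1}$ and the translated frame to write $\tau=\tau(b_1,\ldots,b_{k+1})$ with $\vec 0\notin\tau$, and apply part 1 of Lemma~\ref{Lemma:SemiHyperplanes} to every hyperplane through $\tau$ that misses the origin, which gives $\overline b\in\bigcap_{i\in I'}\Lambda_{\alpha,\epsilon,k}(P_i)$. Your remarks on the affine independence of the $b_j$ and the finite union bound over witnessing flats only make explicit what the paper leaves inside its ``unique representation'' assumption.
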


\begin{proof}
Fix an arbitrary $p$-size set $\P'=\{P_{i_1},\ldots,P_{i_p}\}\in {\P\choose p}$. According to the $(p,q,\alpha)$-property of $\P$, some $q$ sets $P_{i_j}$ of $\P'$ must be $\alpha$-split by a single $k$-flat $\tau$. 
Assume with no loss of generality that these sets have indices $i_1,\ldots,i_q$.
In the view of our choice of $\lambda_1,\ldots,\lambda_{k+1}$, it can be assumed that $\tau$ does not pass through the origin and, furthermore, admits a unique representation $\tau=\tau(b_1,\ldots,b_{k+1})$, for some $(b_1,\ldots,b_{k+1})\in \lambda_1\times\ldots\times \lambda_{k+1}$. Since the family $\{P_{i_1},\ldots,P_{i_{q}}\}$ is $\alpha$-split by any hyperplane $h$ through $\tau$, it follows, in particular, that every hyperplane through $\tau$ (that does not pass through $\vec{0}$) is represented by a dual point in $\bigcap_{j=1}^q\Gamma_{\alpha,\epsilon}(P_{i_j})$. Hence, we have that $(b_1,\ldots,b_{k+1})\in \bigcap_{j=1}^q\Lambda_{\alpha,\epsilon,k}(P_{i_j})$. The lemma now follows by repeating the previous argument for all choices of $\P'$.
\end{proof}

The family $\{\Lambda_{\alpha,\epsilon,k}(P_1),\ldots,\Lambda_{\alpha,\epsilon,k}(P_n)\}$ within $\lambda_1\times\ldots\times \lambda_{k+1}\simeq\reals^{(k+1)(d-k)}=\reals^{q-1}$ thus has a $(p,q)$-property and every member of it is a semi-algebraic set of bounded description complexity. According to Theorem \ref{Theorem:pq-semi}, this family admits a transversal\\ $\tilde{Q}\subseteq \lambda_1\times\ldots\times \lambda_{k+1}$ by at most $C$ points, where $C$ is some constant that depends only on the ambient dimension $q-1$, $p$, and the maximum description complexity of the semi-algebraic sets $\Lambda_{\alpha,\epsilon,k}(P_i)$, which in turn (and in view of Lemma \ref{Lemma:SemiHyperplanes}, and the definition (\ref{Eq:Lambda})) depends only on $d,k$, and $\epsilon$. 

To complete the proof of Theorem \ref{Theorem:SplitFlats}, let us fix $i\in \{1,\ldots,n\}$, and consider a $(k+1)$-tuple $(b_1,\ldots,b_{k+1})$ in $\Lambda_{\alpha,\epsilon,k}(P_i)\cap \tilde{Q}$. Since (i) $\tau(b_1,\ldots,b_{k+1})$ is an $l$-flat, for some $0\leq l\leq k$, that does not contain the origin $\vec{0}$, and (ii) any hyperplane through $\tau(b_1,\ldots,b_{k+1})$ that misses the origin, yields a point $\overline{a}\in \Gamma_{\alpha,\epsilon}(P_i)$, combining the definition of $\Gamma_{\alpha,\epsilon}(P_i)$ in Lemma \ref{Lemma:SemiHyperplanes} with a simple continuity argument implies that $P_i$ is $(\alpha+\epsilon)$-split by {\it any} hyperplane through $\tau(b_1,\ldots,b_{k+1})$ (including such hyperplanes that pass through $\vec{0}$). As a result, the set $P_i$ is $(\alpha-\epsilon)$-split by any $k$-flat through $b_1,\ldots,b_{k+1}$. Hence, the desired $(\alpha-\epsilon)$-splitting set for $\P$ can be obtained by fixing one $k$-flat through each $(k+1)$-tuple $(b_1,\ldots,b_k)\in \tilde{Q}$. $\qed$

\section{Concluding Remarks}\label{Sec:Conclude}
We have presented two different proofs of Theorem \ref{Theorem:SplitHyperplanes}, only one of which extends to flats of intermediate dimensionality $1\leq k\leq d-2$.
Since no $(p,q)$-theorems can exist for $k$-flat transversals to general families of convex sets, for $1\leq k\leq d-2$ \cite{alon2002transversal}, it seems unlikely that a general $(p,q)$-type statement similar to Theorem \ref{Theorem:SplitFlats} can hold with $\epsilon=0$.
However, it remains an intriguing question to determine if Theorem \ref{Theorem:SplitHyperplanes} can be established with $\beta=\alpha$. 
Another promising line of investigation is to determine whether the general ideas in Sections \ref{Sec:Hyperplanes} and \ref{Sec:Flats} can be used to make progress on additional quantitative or approximate Helly-type problems \cite{barany1982quantitative,de2015quantitative,Gao2008intristic_helly,ivanov2024helly}.

%bibliography
\printbibliography

\end{document}